\documentclass[11pt,a4paper]{amsart}
\usepackage[T1]{fontenc}
\usepackage{lmodern}
\usepackage{amsmath,amssymb,amsthm,mathtools,booktabs,longtable}
\usepackage[a4paper,margin=30mm]{geometry}
\usepackage{microtype}
\usepackage{hyperref}
\hypersetup{colorlinks=true,linkcolor=blue,citecolor=blue,urlcolor=blue,pdftitle={The small Davenport constant of E2 times C3r for zero to three central factors},pdfauthor={Andreas Volkmann}}
\newtheorem{theorem}{Theorem}[section]
\newtheorem{lemma}[theorem]{Lemma}
\newtheorem{proposition}[theorem]{Proposition}

\theoremstyle{definition}

\newtheorem{remark}[theorem]{Remark}
\newcommand{\F}{\mathbb F_3}
\newcommand{\dd}{\mathsf d}

\newcommand{\one}{\mathbf 1}

\title[Small Davenport constants for three central factors]{The small Davenport constant of $E_2\times C_3^r$ for $0\le r\le3$}
\author{Andreas Volkmann}
\date{September 29, 2026}
\subjclass[2020]{05E16, 20D15, 11B30}
\keywords{Small Davenport constant, product-one-free sequence, extraspecial group, finite-field certificate, computer-assisted proof}
\begin{document}
\begin{abstract}
Let $E_2$ be the extraspecial group of order $3^5$ and exponent three.
We prove that $\dd(E_2\times C_3^r)=2r+10$ for $0\le r\le3$.
The upper bounds follow from signed zero-block identities and two finite
statements in the four-dimensional symplectic space over $\mathbb F_3$.
The first supplies edge weights for all completable balanced triangles
on any indexed list of at most sixteen nonzero vectors. The second
supplies weights for the direction families that can occur in a critical
list with no central terms. We give complete coverage arguments, exact
certificate files, and separately implemented checking programs.
A compression argument removes any need for an induction through smaller
list lengths in the sixteen-term potential theorem. The formula for
$r\ge4$ remains open.
\end{abstract}
\maketitle

\section{Introduction}
An indexed sequence over a finite group is \emph{product-one-free} if no
nonempty selection of its occurrences can be ordered with product equal
to the identity. Its maximum possible length is the small Davenport
constant $\dd(G)$. The word ``indexed'' matters: repeated group values
remain separate available occurrences.

Let $E_2$ be the extraspecial group of order $3^5$ and exponent three.
We consider the conjectural formula
\begin{equation}\label{eq:conjecture}
 \dd(E_2\times C_3^r)=2r+10\qquad(r\ge0).
\end{equation}
The lower bound is elementary. The issue is to exclude a
product-one-free sequence of length $2r+11$.

\begin{theorem}\label{thm:main}
For $r=0,1,2,3$,
\[
 \dd(E_2\times C_3^r)=2r+10.
\]
In particular, the values for one, two, and three central direct factors
are $12$, $14$, and $16$, respectively.
\end{theorem}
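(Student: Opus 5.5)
The plan has two halves: an explicit product-one-free sequence of length $2r+10$, and the exclusion of every sequence of length $2r+11$. The upper bound is the real content.

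\textbf{Lower bound.} Write $E_2=\langle x_1,y_1,x_2,y_2\rangle$ with centre $Z=\langle z\rangle$, where $[x_i,y_i]=z$ and all other commutators of generators are trivial. For the $C_3^r$ factor take generators $e_1,\dots,e_r$. I would use the sequence
\[
x_1^{[2]}\,y_1^{[2]}\,x_2^{[2]}\,y_2^{[2]}\,z^{[2]}\,\cdots
\]
suitably adjusted, together with $e_j^{[2]}$ for $j\le r$, and check directly that it is product-one-free. The check goes by projecting to the abelianization $C_3^4\times C_3^r$. There a product-one subsequence must use each non-central direction zero or three times. With only two copies of each, it must therefore be supported on the central part, and the central part is product-one-free by the $C_3^{r+1}$ Olson bound. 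This gives $2(4+1+r)=2r+10$ once the pairs are chosen so that commutator contributions cannot cancel. If the naive choice fails, I would take the standard length-$10$ example for $E_2$ from the literature and adjoin $e_j^{[2]}$, which raises the length by $\dd(C_3^r)=2r$.

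\textbf{Upper bound.} Let $S$ be a product-one-free sequence of length $2r+11$ over $E_2\times C_3^r$. Project $S$ to $V=E_2/Z\cong\F^4$, which carries the symplectic commutator form $\omega$. Each term is then a pair (vector in $V$, central coordinate in $\F\times\F^r$); the terms with zero vector are the central terms. The product of a block whose projection sums to zero in $V$ can be steered within the centre by reordering, and the amount of steering is governed by $\omega$ on the block's terms; this is the signed zero-block identity. The reduction I would prove is that if the non-central part admits zero-sum blocks of $V$ whose achievable central values cover enough of $Z$, then together with the central terms one obtains a product-one subsequence via Olson's theorem for $C_3^{r+1}$. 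This turns the problem into a potential inequality. Assign weights to the edges of completable balanced triangles $\{u,v,-u-v\}$ in the list of nonzero vectors, with total weight at least (list length) minus a constant. Then each such triangle gives one free central degree of freedom, and a counting argument gives a contradiction once the length exceeds $2r+10$.

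\textbf{Key steps.}
\begin{enumerate}
\item The case split on the number $k$ of non-central terms. Since $r\le3$, we have $k\le 2r+11\le17$. When there is at least one central term, the remaining list has at most $16$ nonzero vectors, and this case is handled by the sixteen-term weight theorem.
\item The case $k=2r+11$ with no central terms. Here I need the second finite statement: a classification of the direction families possible in a critical list, each with its own weights.
\item The verification of both finite statements. This is done by orbit enumeration under $\mathrm{Sp}_4(\F)$, which has order $51840$, acting on multisets of projective directions, with a certificate checked per orbit.
\end{enumerate}

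\textbf{Main obstacle.} I expect the main obstacle to be the sixteen-term weight theorem. Enumerating all indexed lists of up to $16$ vectors modulo symmetry is large. I would first try a compression argument: repeated vectors and antipodal pairs can be merged into a weighted multiset on the $40$ projective points, and the required inequality is shown to be monotone under adding terms. This reduces the verification to maximal configurations, and it avoids an induction through every smaller list length.
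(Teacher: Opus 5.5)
Your lower bound is correct and is the paper's construction. The hedging is unnecessary: once every $x_i,y_i,e_j$ has multiplicity zero there are no commutator terms, and $z^{[2]}$ is product-one-free. Your case split for the upper bound is exactly Proposition~\ref{prop:occupancy}: at least one central term leaves at most $2r+10\le16$ noncentral terms, and otherwise a direction-family statement is used.

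The genuine gap is the bridge from the finite weights to a contradiction. What you describe does not give a contradiction: a ``potential inequality'' with total weight at least the length minus a constant, one ``free central degree of freedom'' per triangle, then Olson for $C_3^{r+1}$ and a counting argument. An edge potential is not an inequality at all. It is an exact solution over $\F$ of $\rho_{ij}+\rho_{ik}+\rho_{jk}=1$ on a prescribed family of triangles, and it is useful only when paired linearly with signed block counts.

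The missing idea is the Boolean moment identity at the Chevalley--Warning threshold $n=2a+3$: $\sum_B(-1)^{|B|}F(\one_B)=0$ whenever $\deg F<n$. This, not the reordering freedom, is the ``signed zero-block identity''. Applied to $Q(X)X_iX_j$ it gives $N(e)=0$, hence $\sum_{K\supseteq e}\mu_K=0$ for every nonorthogonal pair. Applied to $Q(X)(1-T(X)^2)$ it gives $\Theta(S)=1$. Both applications rest on Lemma~\ref{lem:nonfull}: in a product-one-free sequence every vector-zero block is either commuting with nonzero central sum, or a balanced triangle core with isolated vertices and central sum zero. A potential then yields $\Theta(S)=\sum_e\rho_e\sum_{K\ni e}\mu_K=0$, which is the contradiction.

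Three further ingredients are missing.
\begin{enumerate}
\item Your triangle family $\{u,v,-u-v\}$ is too small. An independent balanced triple of sum $h\ne0$ can be completed to a vector-zero block by isolated terms on $\langle h\rangle$, and it can carry $\mu_K\ne0$. So the weights must also satisfy the equation on those triples for which $-h$ or two copies of $h$ occur; this is the family $\mathcal A(U)$.
\item In the central-free case, the direction sets are restricted to $\mathcal D$ only through Lemma~\ref{lem:cf-compatibility}: two supporting blocks must meet, since otherwise their union has two disjoint edges and is full. The weights used there are $R+\rho_D$, which equal one on every supported triple.
\item Compression followed by a check of ``maximal configurations'' is not a feasible verification. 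Capacity-two sixteen-index lists still form on the order of $10^{12}$ orbits under the symplectic similitude group. The paper makes the finite check tractable with three further steps: the minimal-inconsistent-family argument (certificate R together with the good direction sets, forcing a singleton direction), the star normal form, and a complete circuit search.
\end{enumerate}
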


The earlier paper \cite{VolkmannTwelve} excludes critical sequences
having exactly $2r-1$ central terms. That result leaves other central
occupancies untreated and does not assert the values in
Theorem~\ref{thm:main}. The present argument uses a different finite
potential theorem and a central-free compatibility theorem to cover all
occupancies for the stated values of $r$. We recall the elementary
ordering classification also used in \cite{VolkmannTwelve}, so that the
group-theoretic reduction below is self-contained. The previously
established Heisenberg family \cite{VolkmannH27} provides context; its
Davenport formula is not an input to this proof.

There are two computational inputs. The first is an edge-potential
statement for arbitrary indexed lists of at most sixteen nonzero
vectors in $\F^4$. The second is a compatibility classification for
families of directions of balanced triangles. Their proofs consist of
mathematical reductions to explicitly described finite enumerations,
followed by exact arithmetic over $\F$. The accompanying programs,
certificates, and execution records distinguish checking a stored weight
from establishing the coverage of the enumeration. No random sample is
used to establish either finite statement.

\section{Critical sequences and signed zero-block identities}\label{sec:framework}

\subsection{Coordinates and ordering corrections}
Put $a=r+4$ and write $V=\F^a=P\oplus R$, where $P=\F^4$ and
$\dim R=r$. Equip $V$ with an alternating form $\omega$ whose radical
is $R$ and whose restriction to $P$ is
\[
 \omega(x,y)=x_0y_1-x_1y_0+x_2y_3-x_3y_2.
\]
The group $G=E_2\times C_3^r$ can be written as $V\oplus\F$ with
\begin{equation}\label{eq:group}
 (v,t)(w,s)=(v+w,t+s-\omega(v,w)).
\end{equation}
Indeed $1/2=-1$ in $\F$; these are the usual class-two coordinates.
The derived group is $\{0\}\oplus\F$, and $(v,t)$ is central precisely
when the projection $\bar v\in P$ is zero.

For an indexed block $B$, let $\Omega(B)\subseteq\F$ be the set of
ordering corrections
\[
 -\sum_{j<k}\omega(v_{i_j},v_{i_k})
\]
as $(i_1,\ldots,i_{|B|})$ ranges over all orders of $B$.
Its possible products have vector coordinate $\sum_{i\in B}v_i$ and
last coordinate $\sum_{i\in B}t_i+\Omega(B)$.
Call $B$ \emph{full} if $\Omega(B)=\F$.
The nonorthogonality graph of $B$ joins two occurrences when their
pairing is nonzero. A triple $(i,j,k)$ is \emph{balanced} when
\begin{equation}\label{eq:balanced}
 \omega(v_i,v_j)=\omega(v_j,v_k)=\omega(v_k,v_i)\ne0.
\end{equation}
This property is independent of the chosen cyclic ordering.

\begin{lemma}[Nonfull blocks]\label{lem:nonfull}
A nonempty block is nonfull exactly in the following cases:
its graph is edgeless, consists of one edge and isolated vertices,
or consists of one balanced triangle and isolated vertices.
The correction set is $\{0\}$ in the first case and $\{1,-1\}$ in
the other two cases.
Consequently, a vector-zero block in a product-one-free sequence is
either commuting or a balanced triangle with isolated vertices.
\end{lemma}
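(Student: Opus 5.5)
We argue directly from the definition of $\Omega(B)$ as the set of values of $-\sum_{j<k}\omega(v_{i_j},v_{i_k})$ over orderings. The key tool is the effect of an adjacent transposition. Swapping two consecutive entries $u,w$ changes the correction by $\pm 2\omega(u,w)$, which equals $\mp\omega(u,w)$ in $\F$. Hence any two orderings differ by a sum of pairings over the pairs whose relative order was inverted. More precisely, if $\pi,\sigma$ are orders and $I$ is the set of pairs inverted between them, the corrections differ by $2\sum_{\{i,j\}\in I}\pm\omega(v_i,v_j)$, with the sign given by orientation.

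First we treat the three listed cases. If the graph is edgeless, every term vanishes, so $\Omega=\{0\}$. With one edge $\{i,j\}$, the only contribution is $-\omega(v_i,v_j)$ or $-\omega(v_j,v_i)$, according to which of $i,j$ comes first. This gives $\{1,-1\}$. For a balanced triangle with common value $c=\omega(v_i,v_j)=\omega(v_j,v_k)=\omega(v_k,v_i)$, we list the six orders of $i,j,k$ and compute.
\begin{itemize}
\item The order $ijk$ gives $-(c+c-c)=-c$.
\item Each cyclic rotation also gives $-c$.
\item The reversals give $+c$.
\end{itemize}
So $\Omega=\{c,-c\}=\{1,-1\}$, and isolated vertices do not contribute.

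Conversely, suppose the graph has at least one edge and is not of these types. We show $\Omega=\F$ by exhibiting orders with three distinct corrections. We proceed by cases on the edge structure.

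\emph{Two disjoint edges, or a path of length two.} Take edges $e,f$ involving at most four vertices and place all other vertices at the end in a fixed order. Toggling the orientation of $e$ alone, of $f$ alone, or of both changes the correction by $\epsilon_e$, $\epsilon_f$, or $\epsilon_e+\epsilon_f$, where each $\epsilon\in\{\pm1\}$ times a fixed sign. For disjoint edges the toggles are independent, and the values $0,\epsilon_e,\epsilon_f,\epsilon_e+\epsilon_f$ cover $\F$. For a path $i-j-k$ we use the orders $ijk$, $jik$, $ikj$, $kij$ and similar. Here the contributions from pairs in the path can be flipped independently, since $i$ and $k$ are nonadjacent, so their relative order is free. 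This again gives three values.

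\emph{A triangle that is not balanced, or a larger connected graph.} A non-balanced triangle has pairings whose cyclic sum is nonzero or mixed, and computing the six orders directly gives three values. Any graph with at least two edges that is not a triangle contains either two disjoint edges or an induced path of length two. In that case we reduce to the previous case by pushing the remaining vertices to one end of the order. Doing so only adds a constant once their relative positions with the chosen vertices are fixed; fixing them consistently is the delicate point. The main obstacle is exactly this reduction: ensuring that flipping one pair does not simultaneously flip pairs with other vertices. We handle it by realizing each flip as an adjacent transposition of the two vertices concerned, positioned consecutively. A balanced triangle plus any further edge contains a path of length two whose endpoints are nonadjacent, so it is also covered.

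\emph{Consequence for vector-zero blocks.} If $B$ has vector sum $0$ and $\Omega(B)\ni-\sum t_i$, then $B$ gives product one. So in a product-one-free sequence, every vector-zero block is nonfull, and hence is of one of the three types. A single edge cannot occur. If $\bar v_i$ and $\bar v_j$ pair nonzero and every other element is orthogonal to both, then $0=\omega(v_i,\sum_k v_k)=\omega(v_i,v_j)\neq0$, a contradiction. It follows that the block is either commuting or a balanced triangle with isolated vertices.
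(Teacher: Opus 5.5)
Your proof is correct and follows essentially the same route as the paper's. Both arguments place the chosen subblock consecutively so that its cross terms are constant, show that two disjoint edges and a two-edge star (your induced path $i-j-k$) give all of $\mathbb F_3$ by independent sign flips, compute the six orders of a triangle directly, and exclude the single edge because $\omega(v_i,\sum_B v_k)=0$ in a vector-zero block.

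One small slip: your remark that a balanced triangle plus a further edge always contains an induced path of length two is false, for example for $K_4$ or for a triangle plus a disjoint edge. Those graphs contain two disjoint edges, so the general dichotomy you state just before that remark already covers them.
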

\begin{proof}
A full subblock makes the entire block full: place the subblock
consecutively, so that its cross contributions do not depend on its
internal order. Two disjoint edges are full, since independently
swapping their adjacent endpoint pairs produces
$c+\{0,u\}+\{0,v\}=\F$ with $u,v\ne0$.
A graph with no two disjoint edges is a star or a triangle after its
isolated vertices are deleted. A star with at least two edges is full:
for its center and two leaves the two nonzero contributions can receive
their signs independently. For a triangle with cyclic pairings
$u,v,w\in\{1,-1\}$, the correction set is
\[
 \{\pm\tfrac12(u+v-w),\ \pm\tfrac12(u-v-w),\
   \pm\tfrac12(-u+v-w)\}.
\]
This is $\{1,-1\}$ precisely when $u=v=w$, and is $\F$ otherwise.
The edgeless and single-edge cases are immediate.
Finally, a full vector-zero block would have an ordering with product
one. A vector-zero block has weighted degree zero at every vertex,
because $\omega(v_i,\sum_Bv_j)=0$; hence its graph has no vertex of
degree one. This excludes the single-edge case.
\end{proof}

We say that a vector list is \emph{No-full} if every vector-zero
subblock is commuting or a balanced triangle with isolated vertices.
In such a triangle block its three nonisolated occurrences form its
unique \emph{core}. For a product-one-free group sequence, a nonempty
commuting vector-zero block has $\sum_Bt_i\ne0$, whereas a triangle
block has $\sum_Bt_i=0$.

\subsection{Boolean moments}
Let $S=(v_i)_{i=1}^n$ with $n=2a+3$. All signed counts in the rest of
the proof lie in $\F$. For an index set $Y$, define
\[
 N(Y)=\sum_{\substack{B\subseteq[n]\,;\ \sum_{i\in B}v_i=0\\Y\subseteq B}}
              (-1)^{|B|}.
\]
The empty block is included. For a balanced triple $K$ set
\begin{equation}\label{eq:mu}
 \mu_K=-N(K),\qquad \Theta(S)=\sum_{K\text{ balanced}}\mu_K.
\end{equation}
If $\mu_K\ne0$, at least one vector-zero block contains $K$.
Under No-full, that block has core $K$ and its other terms are
isolated and pairwise orthogonal. We call such a core \emph{supported}.

\begin{lemma}\label{lem:moments}
For $|Y|\le2$ one has $N(Y)=0$.
\end{lemma}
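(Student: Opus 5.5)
The plan is to express the signed count $N(Y)$ as a Boolean sum of a low-degree polynomial over $\F$ and then use the fact that an alternating sum over a Boolean cube kills every monomial that omits a variable. This is the Chevalley--Warning/Olson mechanism, and the inequality $n=2a+3>2a+2$ is exactly what it needs.

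\emph{Step 1: encode vanishing of the vector sum as a polynomial.} For $x\in\F$ one has $1-x^2=1$ if $x=0$ and $0$ otherwise. Hence for $w\in V=\F^a$ the indicator $[w=0]$ equals $\prod_{j=1}^{a}(1-w_j^2)$ in $\F$, where $w_j$ are the coordinates. Introduce variables $x_1,\dots,x_n$ and set
\[
 f(x)=\prod_{j=1}^{a}\Bigl(1-\bigl(\textstyle\sum_{i=1}^{n}v_{ij}x_i\bigr)^2\Bigr),
\]
a polynomial over $\F$ of total degree at most $2a$. A block $B$ corresponds to its characteristic vector $x\in\{0,1\}^n$, and we have $f(\one_B)=[\sum_{i\in B}v_i=0]$. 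Since all counts are read in $\F$, this gives
\[
 N(Y)=\sum_{\substack{x\in\{0,1\}^n\\ x_i=1\ (i\in Y)}}(-1)^{\sum_i x_i}f(x).
\]

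\emph{Step 2: restrict to the free coordinates.} Substitute $x_i=1$ for $i\in Y$. The result is a polynomial $g$ in the remaining $m=n-|Y|\ge 2a+1$ variables, still of degree at most $2a$, and $N(Y)=(-1)^{|Y|}\sum_{x\in\{0,1\}^m}(-1)^{\sum x_i}g(x)$. On $\{0,1\}$ we have $x_i^k=x_i$ for $k\ge1$, so we may replace $g$ by its multilinear reduction without changing its values there. This reduction does not increase the degree. Each monomial $x^S$ that remains has $|S|\le 2a<m$, so some free variable $x_\ell$ does not occur in it.

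\emph{Step 3: cancellation.} For a monomial omitting $x_\ell$, pair each $x$ with the point obtained by flipping $x_\ell$. The monomial value is unchanged while the sign $(-1)^{\sum x_i}$ reverses, so $\sum_{x}(-1)^{\sum x_i}x^S=0$ already over $\mathbb Z$, hence in $\F$. Summing over monomials gives $N(Y)=0$. The empty block is covered automatically, since it is the point $x=0$ when $Y=\emptyset$.

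The only point needing care is the degree count. The bound $|Y|\le2$ is precisely what keeps $m\ge 2a+1>2a\ge\deg g$, and the choice $n=2a+3$ is made for this. I therefore expect no real obstacle. One must only note that the coefficients $v_{ij}$ and the indicator identity live in $\F$, so the conclusion is an identity in $\F$, which matches the convention that all signed counts lie in $\F$.
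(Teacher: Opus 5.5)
Your proof is correct and is essentially the paper's argument. The paper applies the same Boolean cancellation identity on the full cube to $Q(X)\prod_{i\in Y}X_i$, which has degree at most $2a+2<n$. You instead substitute $x_i=1$ for $i\in Y$ and cancel on the remaining $n-|Y|\ge 2a+1$ free coordinates, which is the same degree count in a slightly different form.
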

\begin{proof}
The polynomial
\[
 Q(X)=\prod_{h=1}^a\left(1-\left(\sum_{i=1}^n v_{i,h}X_i\right)^2\right)
\]
is the vector-zero indicator on $\{0,1\}^n$ and has degree at most $2a$.
For every polynomial $F$ of degree less than $n$,
\begin{equation}\label{eq:boolean}
 \sum_{B\subseteq[n]}(-1)^{|B|}F(\one_B)=0:
\end{equation}
each monomial omits a variable, and summing over that variable cancels
its contribution. Apply this to $Q(X)\prod_{i\in Y}X_i$.
\end{proof}

\begin{proposition}\label{prop:theta}
For every critical No-full vector list and every nonorthogonal pair
$e=\{i,j\}$,
\begin{equation}\label{eq:edge}
 \sum_{K\supseteq e\,;\ K\text{ balanced}}\mu_K=0.
\end{equation}
If the list comes from a product-one-free group sequence of length
$2a+3$, then
\begin{equation}\label{eq:theta-one}
 \Theta(S)=1.
\end{equation}
In particular, such a sequence has no occurrence with $v_i=0$.
\end{proposition}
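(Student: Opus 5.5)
The plan is to read off both identities from the Boolean moment vanishing of Lemma~\ref{lem:moments}, and from one slightly stronger degree count. In each case the sum is regrouped according to the structure of vector-zero blocks forced by No-full.

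\emph{Edge identity.} Let $e=\{i,j\}$ be nonorthogonal. By Lemma~\ref{lem:moments}, $N(e)=0$, and I expand $N(e)$ as a sum over the vector-zero blocks $B\supseteq e$.
\begin{itemize}
\item No such $B$ is commuting, since it contains an edge.
\item So by No-full, $B$ is a balanced triangle block. Its core contains both endpoints of the edge $e$, because isolated vertices carry no edges.
\item Hence $B$ contains exactly one balanced triple $K\supseteq e$, namely its core.
\end{itemize}
Conversely, I need the following fact: every vector-zero block containing a balanced triple $K$ has core exactly $K$. This holds because the edges of $K$ must lie inside the core, and the core has only three vertices. Therefore the vector-zero blocks containing $e$ are partitioned by their core $K\supseteq e$, and the blocks with core $K$ are precisely all vector-zero blocks containing $K$. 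This gives
\[
0=N(e)=\sum_{K\supseteq e\ \text{balanced}}N(K)=-\sum_{K\supseteq e}\mu_K,
\]
which is \eqref{eq:edge}.

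\emph{The value $\Theta(S)=1$.} Here I use the group sequence $(v_i,t_i)$ of length $n=2a+3$ and the polynomial
\[
Q(X)\Bigl(1-\bigl(\textstyle\sum_i t_iX_i\bigr)^2\Bigr).
\]
On $\{0,1\}^n$ it is the indicator of blocks with $\sum_B v_i=0$ and $\sum_B t_i=0$. Its degree is at most $2a+2<n$, so \eqref{eq:boolean} gives $\sum (-1)^{|B|}=0$ over such blocks. I classify these blocks using Lemma~\ref{lem:nonfull} and the remarks after it.
\begin{itemize}
\item The empty block contributes $+1$.
\item Nonempty commuting vector-zero blocks are excluded, since they have $\sum t_i\neq0$.
\item Every triangle block appears, since each has $\sum t_i=0$.
\end{itemize}
Grouping the triangle blocks by core as above, the sum equals $1+\sum_K N(K)=1-\Theta(S)$. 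Hence $\Theta(S)=1$. The one point to check carefully is the claim that triangle blocks automatically have $\sum t_i=0$. If this failed, their products would be $0+\sum t_i+\{\pm1\}$, and one choice of order would give product one; so the claim is exactly the product-one-freeness of the sequence.

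\emph{No zero vectors.} Suppose $v_i=0$. Then $i$ is orthogonal to everything, so toggling $i$ is a sign-reversing involution $B\mapsto B\triangle\{i\}$ on the vector-zero blocks containing any fixed balanced $K$ (note $i\notin K$, since $i$ lies on no edge). Thus $N(K)=0$ for all $K$, so $\Theta(S)=0\neq1$, which is a contradiction.

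The main obstacle is the bookkeeping that each vector-zero block containing a balanced triple has that triple as its unique core. This is what makes the regroupings $N(e)=\sum_K N(K)$ and $\sum=1+\sum_K N(K)$ exact, so it is the step I would verify most carefully against the No-full definition.
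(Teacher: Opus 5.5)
Your proposal is correct and follows the paper's own proof step for step: partition the vector-zero blocks containing $e$ by their unique core and apply Lemma~\ref{lem:moments}; apply \eqref{eq:boolean} to the degree-$(2a+2)$ polynomial $Q(X)(1-T(X)^2)$, with $T(X)=\sum_i t_iX_i$, to get $0=1-\Theta(S)$; and cancel each $N(K)$ by toggling an occurrence with $v_i=0$. Your extra checks, that every vector-zero block containing a balanced $K$ has core exactly $K$ and that product-one-freeness forces $\sum_B t_i=0$ on triangle blocks, spell out points the paper uses without comment.
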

\begin{proof}
Every vector-zero block containing $e$ has a unique triangle core
containing $e$. Partitioning these blocks by their third core occurrence
gives $N(e)=\sum_{K\supseteq e}N(K)$; use Lemma~\ref{lem:moments}.

Write $T(X)=\sum_i t_iX_i$. The polynomial $Q(X)(1-T(X)^2)$ has degree
at most $2a+2<n$. In \eqref{eq:boolean}, its empty-block contribution
is one. Its contribution at a nonempty commuting vector-zero block is
zero, and at a triangle block it is $(-1)^{|B|}$, by product-one
freeness and Lemma~\ref{lem:nonfull}. Therefore
$0=1+\sum_{B\text{ triangle block}}(-1)^{|B|}=1-\Theta(S)$.

If $v_j=0$, no balanced core contains $j$. Toggling $j$ pairs and
cancels the summands in $N(K)$ for every balanced $K$, so that
$\Theta(S)=0$, a contradiction.
\end{proof}

\subsection{The finite potential problem}
For an indexed list $U=(u_i)$ of nonzero vectors of $P$, let
$\mathcal A(U)$ consist of all dependent balanced triples and of
those independent balanced triples of sum $h$ for which $U$ contains
an occurrence of $-h$ or at least two occurrences of $h$.
For a balanced triple its sum is zero exactly in the dependent case.
Indeed, its Gram matrix has rank two and kernel generated by $(1,1,1)$;
if the three vectors are dependent their unique relation is their sum.
For an independent balanced triple of sum $h$, its common orthogonal
space in $P$ is the line $\langle h\rangle$.

An \emph{edge potential} for $U$ is a choice $\rho_{ij}\in\F$ on its
nonorthogonal index pairs such that
\begin{equation}\label{eq:potential}
 \rho_{ij}+\rho_{ik}+\rho_{jk}=1
 \quad\text{for every }\{i,j,k\}\in\mathcal A(U).
\end{equation}

\begin{lemma}\label{lem:admissible}
Let $U$ be the nonzero $P$-projections of all noncentral occurrences
of a critical No-full vector list. Every supported core belongs to
$\mathcal A(U)$. If $U$ has an edge potential, then $\Theta(S)=0$.
\end{lemma}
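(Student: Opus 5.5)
The plan is to prove the two assertions separately. The first is a direct description of a supported block. The second is a double-counting argument that combines the edge identity \eqref{eq:edge} with the potential equations \eqref{eq:potential}.

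\emph{Supported cores lie in $\mathcal A(U)$.} Let $K=\{i,j,k\}$ be a supported core. By definition there is a vector-zero block $B\supseteq K$ whose other terms are isolated and pairwise orthogonal.
\begin{itemize}
\item Since $R$ is the radical of $\omega$, all pairings depend only on $P$-projections. Because $K$ is balanced, its three occurrences are therefore noncentral, so $K$ is a balanced triple of $U$.
\item If the projections $\bar v_i,\bar v_j,\bar v_k$ are dependent, then $K\in\mathcal A(U)$ by definition.
\item Otherwise their sum is some $h\ne0$, and the common orthogonal space of the core in $P$ is the line $\langle h\rangle$. Every term of $B\setminus K$ is isolated in $B$, hence orthogonal to all three core vectors, so its projection lies in $\{0,h,-h\}$. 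The nonzero ones are occurrences of $\pm h$ in $U$, distinct from the core.
\item Projecting $\sum_B v=0$ to $P$ gives that the projections of $B\setminus K$ sum to $-h$. If none of them equals $-h$, then exactly $c$ of them equal $h$ with $ch=-h$. This forces $c\equiv2\pmod3$, hence $c\ge2$. In either case $K\in\mathcal A(U)$.
\end{itemize}

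\emph{An edge potential forces $\Theta(S)=0$.} Recall from the remark after \eqref{eq:mu} that $\mu_K\ne0$ implies $K$ is supported. By the first part, $\mu_K=0$ for every balanced $K\notin\mathcal A(U)$. Since every element of $\mathcal A(U)$ is balanced, this gives
\[
\Theta(S)=\sum_{K\in\mathcal A(U)}\mu_K .
\]
Now insert $1=\rho_{ij}+\rho_{ik}+\rho_{jk}$ for each $K\in\mathcal A(U)$ and exchange the order of summation:
\[
\Theta(S)=\sum_{e}\rho_e\sum_{K\in\mathcal A(U),\,K\supseteq e}\mu_K .
\]
Here $e$ runs over nonorthogonal pairs of noncentral occurrences. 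By the same vanishing, each inner sum equals the sum of $\mu_K$ over all balanced $K\supseteq e$, which is $0$ by \eqref{eq:edge} of Proposition~\ref{prop:theta}. Hence $\Theta(S)=0$.

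The only delicate point is bookkeeping. Balanced triples, edges, and the indexing of $U$ all refer to the same noncentral occurrences of $S$, so $\rho$ and $\mu$ are indexed compatibly. Central occurrences never lie in a balanced triple, so they drop out of both sums.
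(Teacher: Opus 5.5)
Your proof is correct and is essentially the paper's argument. Both show that the isolates of a vector-zero completion project into $\langle h\rangle$ with sum $-h$, and both pair the edge identity \eqref{eq:edge} against the potential weights and sum over edges. You simply make explicit some details the paper leaves implicit: that the core occurrences are noncentral, the $c\equiv 2\pmod 3$ count, and the order in which the summations are exchanged.
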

\begin{proof}
Only the independent case needs explanation. Let its sum in $P$ be
$h\ne0$, and take a vector-zero completion of the core. Every
noncentral isolate projects to $h$ or $-h$. Their sum is $-h$.
Thus at least one is $-h$, or at least two are $h$. None is a core
occurrence, since core vectors are not on their common orthogonal line.
Finally multiply \eqref{eq:edge} by $\rho_e$ and sum over edges.
Every supported core is counted with total weight one; unsupported
cores contribute zero. The result is $\Theta(S)=0$.
\end{proof}

\section{A potential theorem for sixteen indices}\label{sec:p16}

We use the notation $\mathcal A(U)$ and edge potentials from Section~\ref{sec:framework}.

\begin{theorem}\label{thm:p16}
Every indexed list $U$ of at most sixteen nonzero vectors in $P$
admits a potential.
\end{theorem}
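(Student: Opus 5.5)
Theorem~\ref{thm:p16} is a statement about a linear system over $\F$. Its unknowns are the $\rho_{ij}$ on nonorthogonal index pairs, and its equations are indexed by $\mathcal A(U)$. A solution exists if and only if the constant vector $\one$ lies in the column space, equivalently if and only if there is no \emph{obstruction}. An obstruction is a weighting $c\colon\mathcal A(U)\to\F$ satisfying $\sum_{K\ni e}c_K=0$ for every edge $e$ while $\sum_Kc_K\ne0$.

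The plan is to reduce the infinitely many indexed lists to finitely many value patterns, and then to certify each pattern by exact linear algebra over $\F$. Directly enumerating all lists of length at most sixteen over the $80$ nonzero vectors is hopeless. We therefore use the following compression step.

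\textbf{Compression.} The set $\mathcal A(U)$ depends only on the underlying set of values and on their multiplicities, and multiplicities above $2$ do not change which independent triples are admitted. Balanced triples consist of three pairwise nonorthogonal occurrences, and two occurrences of the same value are orthogonal. Hence each triple uses three distinct values, and $\mathcal A(U)$ is the pullback of a triple system on values. We would prove a lifting lemma: if the value-level system, with unknowns $\sigma_{xy}$ on nonorthogonal value pairs and equations on admissible value triples, is solvable, then setting $\rho_{ij}=\sigma_{v_iv_j}$ solves \eqref{eq:potential} for $U$.

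It then suffices to treat the data consisting of a set $D$ of distinct nonzero values (with $|D|\le16$) together with the set of values of multiplicity at least $2$, where multiplicity $2$ counts twice towards the bound of sixteen. Enlarging $D$ or the multiplicities only adds equations. Consequently it is enough to certify the maximal configurations, and every smaller list inherits a potential by restriction, provided the existing equations for it are unchanged. They are, except for the admissibility of independent triples, which only grows. This is the step that removes any induction on length.

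\textbf{Symmetry and enumeration.} Next we quotient by the group $\mathrm{Sp}_4(\F)$, extended by $\pm1$ acting on values. Negation preserves balance; the condition involving $-h$ versus $h$ swaps roles, and we must check that it stays consistent. We would then enumerate orbit representatives of the maximal configurations with a canonical-augmentation search. For each representative we solve the system by Gaussian elimination over $\F$ and store the weights $\sigma$ as a certificate. A separate checker verifies \eqref{eq:potential} for each certificate, and independently verifies the completeness of the orbit enumeration via orbit-stabilizer counts.

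\textbf{Main obstacle.} The difficulty is the coverage argument. We must show rigorously that the maximal configurations really dominate, so that solvability is monotone in the right direction. This is delicate because adding values adds unknowns as well as equations, and the lifting from value-level to index-level solutions must respect the multiplicity-two admissibility rule. If full monotonicity fails, the first fallback would be to certify every orbit of value sets of size at most $16$ rather than only the maximal ones. Short of that, the fallback would be a hand argument showing that any obstruction $c$ is supported on a bounded subconfiguration, and then enumerating only those subconfigurations.
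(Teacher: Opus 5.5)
\textbf{Verdict: genuine gap.} The reductions are sound, but what remains after them is an exhaustive enumeration that the proposal neither bounds nor makes checkable.

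Your preliminary reductions are fine. The value-level lifting is the same observation the paper makes. The monotonicity you single out as the ``main obstacle'' is actually immediate, and you already state the argument: if $U\subseteq U'$ then $\mathcal A(U)\subseteq\mathcal A(U')$, and every equation for $U$ involves only pairs of $U$. So a potential for $U'$ restricts to one for $U$. This is exactly how the paper justifies padding to length sixteen.

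The gap comes after these reductions. Your plan is to enumerate and solve every $\mathrm{Sp}_4(\F)$-orbit of sixteen-index configurations. That is the enumeration you called hopeless, divided only by the group order. The $\binom{80}{16}\approx2.7\cdot10^{16}$ sets of distinct values alone, divided by $|\mathrm{Sp}_4(\F)|=51840$ (or $103680$ with similitudes; note $-1$ is already in $\mathrm{Sp}_4$), give at least about $2.6\cdot10^{11}$ orbits. That is before counting doubled values. Each orbit would need canonical-form computations and a stored certificate. Nothing in the proposal shows this is feasible or cuts it down, so the finite statement is never reduced to a checkable computation. Your last fallback, that an obstruction is supported on a bounded subconfiguration, points the right way but gives no mechanism.

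The missing idea is the paper's localization of an obstruction.
\begin{itemize}
\item Take an inclusion-minimal inconsistent family $C$ with $\sum\lambda_TA_T=0$ and $\sum\lambda_T\ne0$ (Lemma~\ref{lem:p16-circuit}).
\item Pair this relation with a universal weight $R$, which is zero on dependent and one on independent balanced triples. This forces the independent coefficients to sum to zero.
\item ``Good'' weights for the set $D$ of sum directions would force the dependent coefficients to sum to zero as well. Hence $D$ is not good, so $|D|\ge8$. If $|D|=8$, $D$ lies in one of three exceptional orbits, each having two directions orthogonal to no other member.
\item The paper uses a sharper compression: keep one copy of $x$ whenever $-x$ occurs, giving at most two indices per projective line. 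Your cap of two per value allows four per line. With the per-line cap, a counting argument gives a triple $K\in C$ whose sum direction carries exactly one index $p$.
\item This fixes a normal form for $K$, $p$, the replacement triangles on the pairs of $K$, and their pools.
\end{itemize}
What remains is two capacity-two star families (about $10^8$ templates) and a circuit search from $648$ starts, where each step inserts only indices forced by a nonzero pair balance. Without some such localization, your approach is an unexecuted exhaustive enumeration rather than a proof.
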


We first specify the finite inputs used in the proof.  Their verification
is by exact arithmetic over $\mathbb F_3$; the accompanying programs
and certificates are part of the computational supplement.

\begin{lemma}[Finite direction certificates]\label{lem:p16-directions}
There is a weight on the $2160$ nonorthogonal unordered vector pairs
whose sum is zero on every dependent balanced triple and one on every
independent balanced triple.  There are $720$ triples of the former
kind and $5760$ of the latter.

Call a set $D$ of projective directions good if there are pair weights
whose sums are one on all dependent balanced triples and zero on all
independent balanced triples with sum direction in $D$.  Every set of
at most seven directions is good.  Every eight-element set is either
good or belongs to one of three specified symplectic orbits.  In each
of these three exceptional orbits, the set has at least two directions
orthogonal to no other member of the set.
\end{lemma}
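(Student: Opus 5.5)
The plan is to reduce each assertion of Lemma~\ref{lem:p16-directions} to a finite linear-algebra problem over $\F$ and to settle it by exact computation, using the symplectic group $\mathrm{Sp}_4(3)$ to cut the enumeration down to orbit representatives.

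\textbf{Step 1: the counts.} Enumerate the $80$ nonzero vectors of $P$ and list the unordered pairs $\{x,y\}$ with $\omega(x,y)\ne0$; each of the $80$ vectors is nonorthogonal to $80-26=54$ others, giving $80\cdot54/2=2160$ pairs. Next enumerate the balanced triples, i.e.\ those satisfying \eqref{eq:balanced}, and split them by whether $x+y+z=0$. As noted before Lemma~\ref{lem:admissible}, a balanced triple is dependent exactly when its sum is zero. A dependent triple $\{x,y,-x-y\}$ with $\omega(x,y)\ne0$ is automatically balanced, which gives $2160/3=720$ triples. The remaining $5760$ are found by direct enumeration.

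\textbf{Step 2: the first weight.} Set up the linear system over $\F$ with $2160$ unknowns $w_{xy}$. It has one equation $w_{xy}+w_{yz}+w_{zx}=0$ for each dependent balanced triple and one equation $=1$ for each independent one. Solve it by Gaussian elimination. The certificate is an explicit solution, and a separate checker verifies all $6480$ equations. Before computing, I would look for an invariant solution, for instance $w_{xy}$ depending only on $\omega(x,y)$ together with a quadratic-type invariant of the pair, to keep the certificate small.

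\textbf{Step 3: the direction sets.} There are $40$ projective directions. For a set $D$ of directions, consider the system with right-hand side $1$ on all $720$ dependent triples and $0$ on the independent triples whose sum direction $\langle h\rangle$ lies in $D$. Solvability is preserved under $\mathrm{Sp}_4(3)$, and also under $-1$, which fixes every direction. So it suffices to enumerate the orbits of subsets of size at most $8$ of $\mathrm{PG}(3,3)$ under $\mathrm{PSp}_4(3)$, for example by canonical augmentation one point at a time. For each representative, either store a solution or store an infeasibility certificate. Such a certificate is a vector $\lambda$ that annihilates the coefficient matrix but pairs nontrivially with the right-hand side, and it can be checked independently.

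Goodness is monotone downward: a solution for $D$ also works for every subset of $D$. Hence it is enough to certify the maximal good representatives, or to check directly that all orbits of $7$-sets are good. The $8$-set orbits are then classified, and exactly three of them should be found infeasible. For those three, the orthogonality graph on the directions is computed to confirm that at least two members are orthogonal to no other member of the set.

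\textbf{Main obstacle.} The hardest part is establishing \emph{coverage}: arguing rigorously that the list of orbit representatives of $\le8$-subsets is complete. I would handle this with an orbit-stabilizer count. The sum of $|\mathrm{PSp}_4(3)|/|\mathrm{Stab}(D)|$ over the representatives must equal $\binom{40}{k}$ for each $k$, with the stabilizers computed exactly. This check would be implemented separately from the solver.
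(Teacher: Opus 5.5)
Your plan is essentially the paper's argument: certificate $R$ is checked by direct substitution, and the orbits of $k$-sets of directions under the projective symplectic group are covered exhaustively, with stored good weights for every orbit except three at level eight, completeness certified by checking that the orbit sizes sum to $\binom{40}{k}$, and the isolation property checked on the three exceptional representatives. Two small remarks: the paper needs no infeasibility certificates for the exceptional orbits, since the lemma only asserts that all other sets are good; and your optional search for a symmetric $R$ cannot succeed with full $\mathrm{Sp}_4(3)$-invariance, because the unordered nonorthogonal pairs form a single orbit, so an invariant weight is constant and sums to $0$ on every triple.
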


The first assertion is certificate R.  For the remaining assertions,
the numbers of all symplectic orbits of $k$-element sets, for
$k=1,\ldots,8$, are respectively
\[
 1,\ 2,\ 5,\ 16,\ 43,\ 194,\ 785,\ 3140.
\]
All but three of the last orbits have stored good weights.  The
independent verifier constructs the projective symplectic group from
symplectic bases, checks every stored weight directly, checks that the
orbits are disjoint, and checks that their sizes sum to $\binom{40}{k}$
at every level.  Thus this is an exhaustive cover, rather than a
sampling of direction sets.  The three exceptional representatives,
using the vector code $x_0+3x_1+9x_2+27x_3$, are
\begin{align*}
 &\{10,13,14,15,17,28,30,31\},\\
 &\{10,12,15,16,27,28,30,36\},\\
 &\{5,10,13,14,28,30,31,36\}.
\end{align*}
Directions orthogonal to no other member include, respectively,
$\{10,15,31\}$, $\{10,16,36\}$, and $\{10,31\}$.
Only the stated cover and these isolation properties are used below;
no certificate of badness for an exceptional set is needed.

\begin{lemma}[Compression]\label{lem:p16-compression}
If an indexed list of at most sixteen vectors has no potential, then
there is such a list of exactly sixteen vectors with at most two
indices on each direction.
\end{lemma}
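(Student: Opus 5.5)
The plan is two monotonicity operations on indexed lists: deleting a surplus occurrence on an overfull direction, and appending a new occurrence. First I would show that each operation preserves the property of admitting no potential. Then I would apply deletions until every direction carries at most two indices, and finally pad up to length sixteen.

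\emph{Padding.} Let $U''$ be obtained from $U$ by appending one nonzero vector. Membership of a triple in $\mathcal A(\cdot)$ is monotone under adding occurrences. A dependent balanced triple is always admissible. For an independent triple with sum $h$, the condition ``$-h$ occurs, or $h$ occurs at least twice'' can only become easier to satisfy. Hence every triple of $\mathcal A(U)$ still lies in $\mathcal A(U'')$. Restricting a potential of $U''$ to the nonorthogonal pairs of old indices therefore gives a potential of $U$. So if $U$ has no potential, neither does $U''$.

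\emph{Deletion.} Suppose some projective direction $\langle h\rangle$ carries at least three indices of $U$. Their vectors lie in $\{h,-h\}$, so by pigeonhole two of them, say $i$ and $j$, satisfy $u_i=u_j$. Let $U'=U\setminus\{i\}$.

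I first check that $\mathcal A(U')$ consists exactly of the triples of $\mathcal A(U)$ avoiding $i$. The only triples whose admissibility could change are independent ones with sum $\pm h$. After deleting $i$, at least two occurrences remain on $\langle h\rangle$. Either both equal $h$, or both equal $-h$, or one equals each. In every case, both the condition for sum $h$ and the condition for sum $-h$ still hold; for example, with $\{h,h\}$, sum $h$ is covered by the repeat and sum $-h$ by the presence of $h$.

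Now take a potential $\rho$ of $U'$ and extend it by $\rho_{ik}=\rho_{jk}$ for every $k\ne i,j$ with $\omega(u_i,u_k)\ne0$. The pair $\{i,j\}$ is orthogonal, so it carries no weight and no balanced triple contains both $i$ and $j$. Because $u_i=u_j$, the map $\{i,k,l\}\mapsto\{j,k,l\}$ preserves balancedness and the sum. It therefore sends triples of $\mathcal A(U)$ containing $i$ bijectively to triples of $\mathcal A(U)$ containing $j$. The latter avoid $i$, so they lie in $\mathcal A(U')$, and $\rho$ sums to one on them. Thus the extended weights satisfy \eqref{eq:potential}, and $U$ has a potential.

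The step needing care is the choice of which index to delete. If one deleted an index $i$ with $u_i=-u_j$, the partner triple $\{j,k,l\}$ would fail to be balanced, since two of its pairings change sign. The copying extension would then break down. This is why the deletion must remove one member of an equal pair, whose existence is exactly the pigeonhole observation.

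\emph{Assembly.} Start from a list of length at most sixteen with no potential. By the deletion step in contrapositive form, repeatedly delete as above until every direction carries at most two indices. Each deletion preserves the absence of a potential, and the length stays at most sixteen. Since there are $40$ directions, there are $80\ge16$ available slots. Append vectors on directions currently carrying fewer than two indices until the length is exactly sixteen. By the padding step, the absence of a potential persists, which gives the list required by Lemma~\ref{lem:p16-compression}.
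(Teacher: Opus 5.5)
Your proof is correct and follows essentially the paper's route. The paper compresses in one step, keeping one copy of $x$ when $-x$ occurs and otherwise $\min(2,\mathrm{mult}(x))$ copies, and then pulls weights back from one representative of each exact value; your iterated deletion of one member of an equal pair, with $\rho_{ik}=\rho_{jk}$, does the same thing step by step and ends at the same multiset of values on each line, followed by the same monotone padding. Your case check, that two surviving occurrences on $\langle h\rangle$ keep triples of sum $h$ and of sum $-h$ admissible, spells out a point the paper only asserts.
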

\begin{proof}
For every exact vector value $x$ keep one copy if $-x$ also occurs;
otherwise keep the smaller of two and the multiplicity of $x$.
The resulting sublist $U_0$ has at most two indices on every
direction.  It retains all exact vector values and every
admissibility condition for a balanced value triple: the presence of
$-h$ is retained, and, if $-h$ is absent, the condition that $h$
occurs at least twice is retained.

If $U_0$ admitted a potential, select one retained representative of
each exact value and pull the weights on representative pairs back
to all index pairs of $U$.  Every balanced triple has three different
values, and its supplementary indices lie on a line containing none
of its corners.  Hence its representative triple is still admissible,
and the pullback solves \eqref{eq:potential}.  Thus $U_0$ has no
potential.  This argument neither averages over copies nor divides
by their multiplicities.

If $|U_0|<16$, append vectors until the length is sixteen, always
using a direction with fewer than two existing indices.  Such
a line exists since there are forty directions.  Adding
indices only adds admissible triples, so it cannot restore
solvability.  The required bound of two indices per line is preserved.
\end{proof}

\begin{lemma}[A minimal inconsistent family]\label{lem:p16-circuit}
Let $C$ be an inclusion-minimal family of admissible value triples
for which the equations \eqref{eq:potential} are inconsistent.
Write $A_T$ for the incidence row of the three pairs of $T$.
There are nonzero coefficients $\lambda_T$ for every $T\in C$ such that
\begin{equation}
 \sum_{T\in C}\lambda_T A_T=0,
 \qquad \sum_{T\in C}\lambda_T\ne0.
 \label{eq:p16-dual}
\end{equation}
Moreover, the rows $A_T$ in every proper subfamily of $C$ are
linearly independent.
\end{lemma}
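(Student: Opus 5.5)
This is a linear-algebra fact over $\F$ (a Fredholm alternative), applied to a minimal inconsistent family. Index the unknowns $\rho_e$ by the nonorthogonal pairs occurring in $C$. The system \eqref{eq:potential} restricted to $C$ reads $A\rho=\one$, where $A$ is the matrix with rows $A_T$ for $T\in C$.

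\textbf{Step 1: a left-kernel vector.} By the standard criterion over a field, $A\rho=\one$ is inconsistent exactly when some vector $\lambda$ satisfies $\lambda A=0$ and $\lambda\cdot\one\neq0$. Equivalently, $\one$ does not lie in the column space of $A$, and that column space is the annihilator of the left kernel. Inconsistency of $C$ therefore gives coefficients $\lambda_T$ satisfying \eqref{eq:p16-dual}, though some of them may vanish.

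\textbf{Step 2: every coefficient is nonzero.} Let $C'=\{T:\lambda_T\neq0\}$. The same $\lambda$ restricted to $C'$ still satisfies $\sum\lambda_TA_T=0$ and $\sum\lambda_T\neq0$. By Step 1 in the converse direction, $C'$ is already inconsistent. Minimality of $C$ then forces $C'=C$.

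\textbf{Step 3: proper subfamilies are independent.} Let $C_1\subsetneq C$. The subfamily $C_1$ is consistent by minimality, so its left-kernel vectors $\nu$ all satisfy $\sum_{T\in C_1}\nu_T=0$. Suppose some nonzero $\nu$ supported on $C_1$ has $\sum\nu_TA_T=0$. For any scalar $c$, the combination $\lambda-c\nu$ is again a left-kernel vector for $C$, and
\[
\sum_T(\lambda-c\nu)_T=\sum_T\lambda_T\neq0 .
\]
Choose $T_0\in C_1$ with $\nu_{T_0}\neq0$ and set $c=\lambda_{T_0}/\nu_{T_0}$. The vector $\lambda-c\nu$ vanishes at $T_0$ but still has nonzero total. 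By Step 1 it witnesses inconsistency of $C\setminus\{T_0\}$, which contradicts minimality. Hence the rows of every proper subfamily are linearly independent.

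The only point needing care is that minimality is taken over families of triples, so the rows $A_T$ for distinct $T$ are distinct. There is no real obstacle here; the whole argument is the dual characterisation of solvability.
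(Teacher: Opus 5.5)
Your argument is correct and follows the paper's proof essentially step for step: the dual characterisation of solvability gives $\lambda$, minimality forces $\lambda$ to have full support, and a row relation $\nu$ on a proper subfamily is handled by subtracting a multiple of $\nu$ from $\lambda$ to kill one coefficient while keeping the total nonzero. The only cosmetic difference is how the case $\sum\nu_T\ne0$ is handled: you rule it out up front from the consistency of $C_1$, whereas the paper treats it as a separate case that directly witnesses inconsistency of a proper subfamily.
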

\begin{proof}
Inconsistency gives \eqref{eq:p16-dual} by linear algebra.  Minimality
forces each coefficient to be nonzero.  Suppose that a proper
subfamily admits a nonzero row relation $\beta$.  If its coefficient
sum is nonzero, it already witnesses inconsistency of a proper
subfamily.  If its sum is zero, subtract a suitable multiple of
$\beta$ from $\lambda$ to cancel one coefficient.  The resulting
relation still has nonzero coefficient sum and has smaller support,
again contradicting minimality.
\end{proof}

Passing from indexed triples to value triples causes no loss here.
One fixed representative of each value carries every admissible
value triple; conversely a solution on those representatives pulls
back to the indexed list.  Thus the two systems are solvable
simultaneously.

\begin{lemma}[A singleton direction]\label{lem:p16-singleton}
For a sixteen-index list satisfying the line bound in
Lemma~\ref{lem:p16-compression}, every minimal inconsistent family
$C$ has an independent member whose sum direction contains exactly
one index of the whole list.
\end{lemma}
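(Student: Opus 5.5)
We argue by contradiction. Suppose every independent member of $C$ has a sum direction carrying two indices of the list; the line bound of Lemma~\ref{lem:p16-compression} then gives exactly two. We aim to build a potential for the triples in $C$, which contradicts the inconsistency of $C$. The tool is the weights of Lemma~\ref{lem:p16-directions}. Let $D$ be the set of sum directions of the independent members of $C$.

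\textbf{Step 1: bound $|D|$.} Each direction in $D$ carries two of the sixteen indices, so $|D|\le8$. The case $|D|=8$ occurs only if these eight directions absorb all sixteen indices. In that case every corner of every balanced triple lies on a direction of $D$.

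\textbf{Step 2: the good case.} Suppose $D$ is good, with pair weights $\sigma$. These sum to one on the dependent balanced triples and to zero on the independent balanced triples with sum direction in $D$. Let $\tau$ be the certificate-R weight, which sums to zero on dependent triples and to one on independent ones. Put $\rho=\sigma+\tau$ on value pairs and pull it back to indices through fixed representatives. Then every dependent member of $C$ receives $1+0=1$, and every independent member receives $0+1=1$, because its sum direction lies in $D$. This is a potential for $C$, a contradiction.

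Every set of at most seven directions is good, so the remaining situation is $|D|=8$ with $D$ in one of the three exceptional orbits.

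\textbf{Step 3: the exceptional case, where I expect the main obstacle.} Here $D$ has at least two directions $\ell$ that are orthogonal to no other member of $D$. All indices lie on $D$, so every corner of a triple in $C$ lies on some direction of $D$. The two indices on $\ell$ are $\pm$ or equal multiples of one vector, so they are pairwise orthogonal, and they are nonorthogonal to every index off $\ell$.

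The plan is to show that $\ell$ cannot be the sum direction of any member of $C$. A balanced triple with sum $h\in\ell$ has corners orthogonal to $h$. Those corners must then lie on directions of $D$ orthogonal to $\ell$, and there are none apart from $\ell$ itself. This would contradict $\ell\in D$, which would settle the case.

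To make this rigorous, one checks that each corner of a triple with sum direction $\ell$ is orthogonal to $h$. It cannot lie on $\ell$ itself, since core vectors are not on their common orthogonal line. So it lies on another member of $D$ orthogonal to $\ell$, and no such member exists.

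If this direct argument fails, the fallback is as follows. Take $D'=D\setminus\{\ell\}$, which has seven directions and is therefore good. Then show that triples with sum direction $\ell$ can be handled by adjusting weights only on pairs involving the isolated directions. The minimality in Lemma~\ref{lem:p16-circuit}, through the nonzero coefficients $\lambda_T$, would be used to force a contradiction through the row relation restricted to edges at $\ell$.

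I expect Step 3 to be the delicate part, specifically confirming that the isolation property really forbids $\ell$ from being a sum direction. If it does, the conclusion is that some independent member of $C$ has a singleton sum direction.
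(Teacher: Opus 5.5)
Your argument is correct and is essentially the paper's: certificate R combined with good weights shows $D$ is not good (you build the explicit potential $\sigma+\tau$, while the paper pairs the same weights with the dual relation). The line bound then forces $|D|=8$, and an isolated direction of an exceptional $D$ pushes the corners of its triple off $D$. Your ``direct argument'' in Step 3 is already complete, and it needs only one isolated direction because under your hypothesis all sixteen indices lie on $D$; delete the fallback paragraph. The one thing to add is that each direction of $D$ carries at least one index by admissibility of its triple, which is what turns ``not exactly one'' into ``exactly two''.
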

\begin{proof}
Let $D$ be the sum directions of the independent triples in $C$.
Apply certificate R to \eqref{eq:p16-dual}.  The sum of the
independent coefficients is zero, so the dependent coefficients
have nonzero sum.  If $D$ were good, its good weights applied to the
same relation would make the latter sum zero.  In particular
$|D|\ge8$ by Lemma~\ref{lem:p16-directions}.  Every direction of
$D$ is occupied by an index, since the corresponding independent
triple is admissible.

If $|D|\ge9$, at least $2|D|-16\ge2$ of these occupied directions
are occupied only once.  If $|D|=8$, the set is one of the three
exceptional types.  Choose two directions $h,h'$ isolated for
orthogonality inside $D$, and corresponding triples in $C$.
Every corner of either triple lies outside $D$: it is orthogonal
to its own sum direction but is not on that direction.  The two
triples are different, since their sum directions differ, and
therefore use at least four distinct exact corner values, hence
at least four indices outside $D$.  At most twelve indices remain
on its eight occupied directions, so at least four of those
directions are occupied only once.  Either case gives the result.
\end{proof}

Choose such a singleton value $p$ and a triple
$K=\{k_1,k_2,k_3\}\in C$ on its direction.  Admissibility forces
$k_1+k_2+k_3=-p$.  By a symplectic similitude and a permutation of
the corners we may use the normal form
\begin{equation}
 k_1=(1,0,0,0),\quad k_2=(0,1,0,0),\quad
 k_3=(2,2,1,0),\quad p=(0,0,2,0).
 \label{eq:p16-normal}
\end{equation}
Indeed, the first two corners span a nondegenerate plane, and the
nonzero sum is orthogonal to that plane; these data extend to a
symplectic basis.  A similitude of multiplier $-1$ allows either
sign of the cyclic pairing.

The balance at each old pair in \eqref{eq:p16-dual} requires another
member of $C$ on that pair.  Denote its third corner by $l_i\ne k_i$
when the pair is $\{k_j,k_k\}$.  For a dependent replacement use an
empty list $T_i$.  For an independent replacement of sum $h_i$,
choose a supplementary $-h_i$ index, if available, or two $h_i$
indices.  Thus $|T_i|\in\{0,1,2\}$, a two-index $T_i$ has two
equal values, and
\begin{equation}
 l_i=k_i+p-\sigma(T_i).
 \label{eq:p16-pools}
\end{equation}

Here the three pools and the seven indices
$k_1,k_2,k_3,p,l_1,l_2,l_3$ are simultaneously available and
pairwise disjoint.  To check this essential point, put
$F_i=\langle k_j,k_k\rangle^\perp$.  Each nonempty pool lies on a
line in $F_i$ outside $\langle p\rangle$.  A pool on
$\langle p\rangle$ would have to consist of its sole index $p$,
forcing $l_i=k_i$.  Also $F_i\cap F_j=\langle p\rangle$ for
$i\ne j$, so different pools are disjoint.  A pool value is
orthogonal to two old corners, whereas every old or replacement
corner is nonorthogonal to two old corners.  The two pairs of old
corners intersect, ruling out equality of these values.  Finally,
replacement corners for two different old pairs demand opposite
pairing values at their common old corner, so they are distinct.
They are also distinct from $p$ and all old corners.  This proves
the asserted simultaneous index realization.

\begin{lemma}[The two positive star types]\label{lem:p16-stars}
Every sixteen-index list satisfying the line bound of two, in which
$p$ is the only index on its direction, and containing the seven fixed indices and the pools in
\eqref{eq:p16-normal}--\eqref{eq:p16-pools}, with sorted pool sizes
$(1,2,2)$ or $(2,2,2)$, admits a potential.
\end{lemma}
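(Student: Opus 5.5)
This lemma is a finite statement, so I would prove it by reducing to an explicitly enumerable family of configurations and then checking each one by exact arithmetic over $\F$. The proof of Theorem~\ref{thm:p16} is built the same way. I would make the choice of the data in \eqref{eq:p16-normal}--\eqref{eq:p16-pools} concrete first. The normal form \eqref{eq:p16-normal} fixes $k_1,k_2,k_3,p$. For each $i$, the pool $T_i$ is nonempty in both star types, since the sorted sizes are $(1,2,2)$ or $(2,2,2)$. It lies on a line of $F_i=\langle k_j,k_k\rangle^\perp$ other than $\langle p\rangle$. Each $F_i$ is a nondegenerate plane with four lines, so there are three admissible lines. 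A pool of size one is a single vector $-h_i$; a pool of size two is a doubled value $h_i$. In either case $\sigma(T_i)$ determines $l_i$ through \eqref{eq:p16-pools}. Up to the stabilizer of the normal form, which includes permutations of corners compatible with the similitude of multiplier $-1$, there are therefore at most a few hundred \emph{star cores}: the seven fixed indices together with the pools, at most thirteen indices in all.

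Next I would bound the remaining freedom. At most $16-13=3$ further indices are added, or $16-12=4$ in the $(1,2,2)$ case. They are subject only to the line bound of two per direction and to $p$ being alone on $\langle p\rangle$. Admissible triples only grow when indices are added. So solvability for the full list is not monotone in the easy direction, and the completions must genuinely be enumerated. Their number is nevertheless modest. There are at most $\binom{80}{4}$ choices of values, cut down by the line bound and by the stabilizer of the core. Moreover, by the remark after Lemma~\ref{lem:p16-circuit}, only the multiset of values matters, in the form ``value present'' and ``value doubled''. Orbit reduction under the stabilizer makes the enumeration exhaustive in the same sense as Lemma~\ref{lem:p16-directions}: the stored orbit sizes must add up to the number of completions.

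For each orbit representative I would store an explicit potential $\rho$ on the value pairs and verify \eqref{eq:potential} directly on every admissible triple. The verifier would recompute $\mathcal A(U)$ from scratch, without trusting the enumeration program. To shrink the search, I would first try to find a potential on the star core that already satisfies every admissible triple whose corners do not use the singleton direction $\langle p\rangle$ in an essential way. I would then extend it uniformly to the added indices, using certificate R as a base weight adjusted on the good direction set. Only the residual configurations where that fails would need individual certificates.

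I expect the main obstacle to be coverage, not the verification of weights. One must argue that the enumerated completions really exhaust every sixteen-index list satisfying the hypotheses. In particular, lists in which added indices coincide in value with pool or corner indices must be covered, and so must lists where two pools of size two share values with replacement corners. The disjointness argument preceding the lemma handles the fixed part, so I would rely on it there. For the added indices I would enumerate values with multiplicity directly, rather than directions, to avoid missing such coincidences.
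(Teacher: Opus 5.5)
Your plan is essentially the paper's proof. The paper also enumerates the pool configurations up to the order-$18$ stabilizer of \eqref{eq:p16-normal}, then the multisets of the remaining three or four free indices under each pool stabilizer (capacity two per line, $p$ alone on $\langle p\rangle$), certifies coverage by an independent Burnside count ($102{,}849{,}075$ and $2{,}548{,}176$ templates), and checks stored weights by direct substitution. The one practical difference is that the paper does not store one potential per template: it repeatedly deletes triangles having a private pair, keeps certificates only for the $8980$ resulting labelled cores, and reconstructs full weights by reversing the deletions.
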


The capacity-two enumeration in \texttt{certify\_s16\_cap2.cpp}
checks these families directly, without invoking a smaller-list theorem.
It enumerates pool orbits under the stabilizer of
\eqref{eq:p16-normal}, followed by free multisets under each pool
stabilizer, subject to the capacity of two indices on each projective
line. Independent Burnside counts give $102{,}849{,}075$ templates
for pattern $122$ and $2{,}548{,}176$ for pattern $222$; both complete
runs attain exactly these counts.
Repeatedly deleting a triangle with a private pair reduces the
system to a labelled core. The $8980$ stored core certificates
satisfy $181{,}768$ core equations; reversing the deletions gives
full weights, checked in $265{,}122$ equations on their recorded
witness lists. The mathematical reconstruction and full coverage
are explained in Section~\ref{sec:reproducibility}.

For completeness, the finite circuit check needed after this star
reduction is specified next, including the argument that its search
cannot miss an inconsistent family.

Normalize the coefficient of $K$ in \eqref{eq:p16-dual} to one.
The three replacement coefficients each have two choices.  If all
pools are nonempty and at least two have size two, apply
Lemma~\ref{lem:p16-stars}.  The remaining sorted patterns are
\[
 000,001,002,011,012,022,111,112.
\]
Each $F_i\setminus\langle p\rangle$ has six nonzero vectors.
The normal-form stabilizer used for seed reduction has order eighteen;
it is generated by
\begin{align*}
 C(x)&=(-x_1,x_0-x_1+x_3,x_1+x_2,x_3),\\
 A(x)&=(x_0-x_1+x_3,-x_1,x_1+x_2-x_3,-x_3).
\end{align*}
It fixes $p$, permutes the old corners, and preserves the form up
to a nonzero scalar.  Direct generation of its closure and of the
pool orbits gives the following complete seed table:
\[
\begin{array}{c|rrrrrrrr|r}
\text{pattern}&000&001&002&011&012&022&111&112&\text{total}\\\hline
\text{raw pools}&1&6&6&36&36&36&216&216&553\\
\text{orbits}&1&1&1&9&12&9&12&36&81
\end{array}
\]
The eight choices of replacement coefficients yield $648$ starts.
The four initial incidence rows are independent: each replacement
triangle has a pair involving its own new corner, absent from the
other initial triangles, and the old triangle remains nonzero.

\begin{lemma}[Complete circuit check]\label{lem:p16-search}
None of these $648$ starts extends to a minimal inconsistent family
on a sixteen-index list with the specified singleton and line bounds.
\end{lemma}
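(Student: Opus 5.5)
The proof will be a finite search whose coverage I argue by hand. The aim is to show that every minimal inconsistent family $C$ containing one of the $648$ starts is reached by a bounded closure process. The search then exhausts these processes and finds no dead end that closes up.

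\textbf{Search state.} A state records:
\begin{itemize}
\item the currently forced members of $C$, with their nonzero coefficients $\lambda_T$;
\item the exact vector values forced to occur, with lower bounds on their multiplicities;
\item the residual imbalance $\sum_T\lambda_TA_T$ on each pair.
\end{itemize}

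\textbf{Expansion step.} By \eqref{eq:p16-dual}, every pair with nonzero residual must lie in a further member of $C$. So I pick one such pair, say the lexicographically first unbalanced pair. For each of its at most eighteen third corners (two signs of pairing times the appropriate affine line) I branch on:
\begin{itemize}
\item whether the new triple is dependent or independent;
\item its admissibility witness, namely a $-h$ index or two $h$ indices;
\item its coefficient in $\{1,-1\}$.
\end{itemize}
Branches on the triple with the opposite coefficient, or on an already used triple, are merged. Because of this forced-pair rule, each minimal $C$ is generated along some branch: at every stage some member of $C$ not yet chosen covers the chosen unbalanced pair. So the search tree cannot miss $C$.

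\textbf{Pruning rules.} I would apply the following checks at each node.
\begin{enumerate}
\item Index budget: the multiset of indices forced so far, counting $k_1,k_2,k_3,p,l_1,l_2,l_3$, the pools, and all new corners and witnesses, must fit in sixteen indices with at most two per direction. In addition, $\langle p\rangle$ carries only $p$, so no witness may use $\pm p$ other than the single $p$ already fixed for $K$.
\item Minimality: by Lemma~\ref{lem:p16-circuit}, the rows of every proper subfamily are independent. Hence any row relation among the current rows must involve all of them and have nonzero coefficient sum. Any other dependency kills the branch.
\item Certificate R pruning, as in Lemma~\ref{lem:p16-singleton}: the final sum of dependent coefficients must be nonzero, and the final direction set $D$ must not be good. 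I can only enforce this at closure, but a cheap partial version is still useful. Any eventual independent member uses a new direction, and each direction costs at least one index. So if the current $D$ lies in a good set and the remaining budget cannot add enough directions to reach eight, the branch dies.
\end{enumerate}

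\textbf{Termination and verification.} A branch terminates with failure of the lemma only if the residual becomes zero with nonzero coefficient sum. The claim is that no leaf does this. Termination follows from the index budget: there are finitely many values on sixteen indices, hence finitely many admissible triples. The program would record node counts per start and compare them against an independent recount of the seed orbits (the $81$ orbits times $8$ sign choices). Symmetry reduction happens only at the seed level, via the order-eighteen stabilizer generated by $C$ and $A$, so correctness does not depend on the symmetry handling inside the search.

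\textbf{Main obstacle.} The main difficulty is keeping the tree finite and small while staying provably exhaustive. The branching factor is large, so the budget bound must be sharp. I would count distinct exact values rather than triples, and charge each witness index at most once when it is shared. The delicate point is justifying the merging of branches that reach the same state by different orders. I would handle this by canonically ordering the forced pairs and arguing that the set of members present in $C$ does not depend on the expansion order.
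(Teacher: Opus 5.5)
Your skeleton is the paper's: a forced-pair search from the $648$ starts that branches on the third corner, the admissibility witness and the coefficient. It prunes by the index and line bounds and by row dependences. Completeness comes from following the coefficients of a hypothetical $C$: by Lemma~\ref{lem:p16-circuit} a proper subfamily has independent rows, so its residual is nonzero, and the chosen pair lies in an unselected member of $C$.

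\textbf{The genuine defect is pruning rule 3.} Independent members of $C$ may share a sum direction; for instance, $K$ and any other triple of sum $-p$ both use the witness $p$. More importantly, a direction that later joins $D$ need not consume a new index. It can already be occupied by a corner of a selected triple, since a corner of one triple may be the $-h$ witness of another. Comparing $8-|D|$ with the remaining budget can therefore discard exactly the branch that follows $C$. For the algorithm as you specify it, ``the search tree cannot miss $C$'' then fails. There are two fixes:
\begin{itemize}
\item drop the rule, as the paper does (it uses certificate R only in Lemma~\ref{lem:p16-singleton}, never inside the search); or
\item bound the final number of directions by the directions already occupied by the current multilist plus the remaining budget.
\end{itemize}
At closure the R conditions follow automatically from \eqref{eq:p16-dual}, so they prune nothing there.

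\textbf{Two smaller corrections.}
\begin{itemize}
\item For a fixed unordered nonorthogonal pair the pairing sign is determined. Its balanced completions are exactly the nine points of an affine plane ($6480\cdot3/2160=9$), one dependent and eight independent. Your ``two signs times the affine line'' is therefore wrong, and dependence is not a separate branch.
\item An already selected triple should simply be excluded, not merged. Exclusion is what makes every step raise the row rank, and that is your termination argument; coefficient updates could cycle.
\end{itemize}

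\textbf{What the paper has and you lack is the terminal cutoff.} Once the available multilist reaches length sixteen, the paper solves the whole admissible system. At length fifteen it solves every allowed one-index extension and rejects the branch if all are soluble. Caching is used only for identical multilists. This is sound because any system containing $C$ is insoluble. It replaces the enumeration of all partial circuits inside a saturated list by eliminations on terminal lists. In your version that enumeration continues until closure or a dead end, and it may grow exponentially with the rank.

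Once rule 3 is repaired, your scheme is complete in principle, but it is a different and potentially much larger computation. The recorded runs ($46{,}308{,}926$ states, $57{,}188{,}554$ terminal systems) certify the paper's algorithm, not yours. Your conclusion that no leaf closes would still have to be obtained by running it.

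Finally, merging identical states needs no order-independence argument. The subtree below a state depends only on the selected family with its coefficients and the forced multilist, so memoizing exhausted states is sound.
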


Here is the complete search specification and its correctness
argument.  A state consists of selected value triples with assigned
nonzero coefficients, their incidence rows, their current pair
balances, and a multilist of at most sixteen available indices.
The latter always contains the singleton $p$, no $-p$, and at most
two indices on every other line.  Exact values receive stable
labels when first inserted.

While the selected incidence rows are independent, some current pair
balance is nonzero.  Choose such a pair.  Try every nonzero vector
as its third corner, subject to balance and to its triple not already
being selected.  Insert a missing corner.  If this independent
triple is not yet admissible, try both ways of making it admissible:
insert $-h$, or increase the multiplicity of $h$ to two.  Try both
nonzero coefficients for the new triple.

Reject only the following branches: an index or line bound is
violated; the selected rows acquire a consistent dependence; or the
entire current admissible system has already been found soluble.
An inconsistent dependence is a failure of the check.  At length
sixteen the whole system is solved.  At length fifteen all allowed
one-index extensions to length sixteen are solved, and the state
is rejected only if every extension is soluble.  Caching is used
only for identical multilists whose entire systems or entire sets
of one-index extensions have already been checked.

Suppose a family $C$ as in Lemma~\ref{lem:p16-circuit} existed.
The preceding normalization and star selection produce one of the
starts, unless Lemma~\ref{lem:p16-stars} already contradicts its
existence.  Follow the coefficients of \eqref{eq:p16-dual}.  A pair
with nonzero current balance must occur in some as yet unselected
triple of $C$; that triple, its actual coefficient, and suitable
supplementary indices from the ambient list are among the branches
tried.  No size or line bound discards this branch.  No proper
subfamily of $C$ has a row dependence, by
Lemma~\ref{lem:p16-circuit}.  If the available list reaches length
fifteen, its actual missing ambient index is among the one-index
extensions.  If it reaches length sixteen, it is the ambient list.
Neither complete system can be soluble, since it contains $C$.
Before reaching this boundary, each successful insertion raises
the row rank.  The process is therefore finite and must find either
the inconsistent dependence of $C$ or an inconsistent terminal
system.  This proves the completeness of the specification.

The stored computation has no such outcome.  One implementation
uses bitwise ternary elimination and additionally constructs explicit
terminal core weights.  Its accepted seed coverage consists of the
disjoint ranges $0$--$10$, $11$--$32$, and $33$--$80$.
An interrupted extra attempt at seed $11$ is not used in this
coverage.  The direct checker verifies all $21541$ saved core
certificates, $454565$ core equations, and $660527$ equations after
reconstructing full terminal weights, as well as the exact seed
coverage.  A second implementation uses ordinary byte matrices and
independently completes all $648$ starts, visiting $46308926$ states
and checking $57188554$ terminal lists, without an inconsistent
terminal system or a search limit.  It does not use any smaller-list
potential theorem.  Different traversal orders give different
terminal counts; no equality of those counts is required.

\begin{proof}[Proof of Theorem~\ref{thm:p16}]
An inconsistent list would, by Lemma~\ref{lem:p16-compression},
give an inconsistent sixteen-index list with the line bound.
Choose a minimal inconsistent value-triple family.  By
Lemma~\ref{lem:p16-singleton} it admits the normalized star
construction above.  The positive patterns are excluded by
Lemma~\ref{lem:p16-stars}, and every remaining pattern is excluded
by the complete check of Lemma~\ref{lem:p16-search}.
This contradiction proves the theorem.
\end{proof}

\begin{remark}
The orientation compression and subsequent padding in
Lemma~\ref{lem:p16-compression} are operations on the finite potential
problem only.  They are not operations on a product-one-free group
sequence and assert no preservation of original group heights.
Their role is to prove the potential theorem, which is subsequently
pulled back to the original indices without altering the group sequence.
\end{remark}

\section{The case without central projections}\label{sec:central-free}

We retain the critical length $n=2a+3$, No-full hypothesis, and signed counts of Section~\ref{sec:framework}.

For a balanced triple independent in $P$, its direction is the line
$h=\langle\sum_{i\in T}\bar v_i\rangle$. Its projected span has dimension
three, and its orthogonal complement in $P$ is exactly $h$.
In fact, writing two corners as $u,v$, the third is
$-u-v+\eta$ with $\eta\in\langle u,v\rangle^\perp\setminus\{0\}$;
then $\eta$ is the sum of the three corners.

\begin{lemma}\label{lem:cf-compatibility}
Assume in addition that $\bar v_i\ne0$ for all $i$.
Let $T,T'$ be balanced triples independent in $P$, with distinct
directions $h,h'$, each contained in a zero-sum block. Then either
$T$ and $T'$ have a common index, or $h\perp h'$ and a corner of one
triple projects onto the direction of the other.
\end{lemma}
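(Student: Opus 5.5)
The plan is to argue by contradiction from disjointness, using the structure of vector-zero blocks given by Lemma~\ref{lem:nonfull}. Fix vector-zero blocks $B\supseteq T$ and $B'\supseteq T'$. By the No-full hypothesis, $B$ is a balanced triangle block. Its core is $T$, because the three corners of $T$ already form a balanced triangle in the nonorthogonality graph of $B$ and the graph has a unique triangle. Its remaining occurrences are isolated vertices, hence orthogonal to every corner of $T$. Since the projected span of $T$ has orthogonal complement exactly $h$ in $P$, each such isolate has projection in $h$. Because $\bar v_i\ne0$ for all $i$, each isolate therefore projects onto $\pm$ a generator of $h$. The same holds for $B'$, with $T'$ and $h'$.

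I also record one orthogonality fact. Write the corners of $T$ as $u$, $v$ and $-u-v+\eta$, as in the preceding paragraph. Then $\eta$ spans $h$ and is orthogonal to $u$ and $v$. It is also orthogonal to the third corner, because $\omega(\eta,-u-v+\eta)=0$. So every corner of $T$ is orthogonal to $h$, but no corner lies on $h$, since its span with the other corners is three-dimensional. The same holds for $T'$ and $h'$.

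Now assume that $T\cap T'=\emptyset$. First suppose $B\cap B'=\emptyset$. Then $B\cup B'$ is a vector-zero block of the sequence. Its nonorthogonality graph contains the edges of both triangles $T$ and $T'$, so it contains two disjoint edges. By the proof of Lemma~\ref{lem:nonfull} such a block is full, and a full vector-zero block contains a product-one ordering. This contradicts product-one freeness (equivalently, the No-full hypothesis), so this case cannot occur.

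Hence $B\cap B'$ contains an occurrence $x$, which by assumption is not a corner of both triples. There are three cases.
\begin{enumerate}
\item If $x$ is an isolate of both blocks, then $\bar x$ is a nonzero vector lying on both lines $h$ and $h'$. This is impossible because $h\ne h'$.
\item If $x$ is an isolate of $B$ and a corner of $T'$, then $\bar x$ spans $h$, so a corner of $T'$ projects onto the direction of $T$. Moreover, corners of $T'$ are orthogonal to $h'$, so $h=\langle\bar x\rangle\perp h'$. This is exactly the second alternative.
\item If $x$ is a corner of $T$ and an isolate of $B'$, the argument is symmetric.
\end{enumerate}

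The only delicate points are the following. The core of $B$ must be exactly $T$, so that every non-$T$ element is isolated and orthogonal to $T$. The union in the disjoint case must be genuinely full, and this follows from the two disjoint edges. Both points are immediate from Lemma~\ref{lem:nonfull}, so I expect the argument to be short. The main care is to make sure the isolate-projection claim uses the hypothesis $\bar v_i\ne0$, which excludes central isolates lying in neither direction.
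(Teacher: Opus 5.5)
Your proposal is correct and matches the paper's proof step for step: the blocks $T\cup J$ and $T'\cup J'$ must intersect, since otherwise their union would be a vector-zero block with two vertex-disjoint edges; the isolates lie on $h$ and $h'$ respectively, so $J\cap J'=\varnothing$; and a common index in $T'\cap J$ (or symmetrically $T\cap J'$) gives the orthogonal alternative. One small correction: in the disjoint-blocks case the union contradicts the No-full hypothesis directly, and product-one-freeness implies No-full but is not equivalent to it, so you do not need to invoke it.
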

\begin{proof}
Write the two zero-sum blocks as $T\cup J$ and $T'\cup J'$, where the
members of $J,J'$ are isolated vertices. The two blocks intersect:
otherwise their union would be a zero-sum block containing two
vertex-disjoint edges. Every member of $J$ projects to a nonzero vector
of $h$, and every member of $J'$ projects to a nonzero vector of $h'$.
Thus $J\cap J'=\varnothing$. A common index in $T\cap T'$ gives the first
alternative. A common index in $T\cap J'$ projects onto $h'$ and, as a
corner of $T$, is orthogonal to $h$, giving the second alternative.
The remaining case is symmetric.
\end{proof}

Here is the finite geometric statement needed below.  Its data and
complete verification programs accompany the paper.
Let $\mathcal E$ be the nonorthogonal unordered pairs of nonzero vectors
of $P$, and let $\mathcal T$ be the balanced triples of such vectors.
There are $2160$ pairs in $\mathcal E$ and $6480$ triples in $\mathcal T$,
of which $720$ are dependent and $5760$ are independent.
For weights $w:\mathcal E\longrightarrow\mathbb F_3$, write
$w(\Delta)=\sum_{e\subset\Delta}w(e)$.

Two independent projected triples with distinct directions $h,h'$ are
called compatible if they share a vector, or if $h\perp h'$ and one
contains a nonzero vector on the direction of the other.
Let $\mathcal D$ consist of the direction sets $D$ for which one can
choose, for every $h\in D$, a triple of direction $h$, such that all
chosen triples are pairwise compatible. This family is downward closed
and invariant under symplectic transformations.

\begin{proposition}[Finite geometric certificates]\label{prop:cf-finite}
\begin{enumerate}
\item There are weights $R:\mathcal E\to\mathbb F_3$ with
$R(\Delta)=0$ for every dependent balanced triple and $R(\Delta)=1$
for every independent balanced triple.
\item For every $D\in\mathcal D$, there are weights
$\rho_D:\mathcal E\to\mathbb F_3$ with $\rho_D(\Delta)=1$ for every
dependent balanced triple and $\rho_D(\Delta)=0$ for every independent
balanced triple whose direction belongs to $D$.
\end{enumerate}
\end{proposition}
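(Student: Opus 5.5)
Part (1) is exactly certificate R of Lemma~\ref{lem:p16-directions}, so nothing new is needed: the same stored weight on the $2160$ nonorthogonal pairs is checked against all $6480$ balanced triples, with sum zero on the $720$ dependent ones and one on the $5760$ independent ones. I would restate the verification as a single exact linear-algebra check over $\F$ on a $6480\times2160$ incidence system, recomputing the counts $720$ and $5760$ from the definitions; this doubles as a consistency check of the enumeration.

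For part (2), I would reduce to finitely many orbit representatives. Let $g$ be a symplectic similitude. Then $g$ permutes $\mathcal E$ and $\mathcal T$, preserves dependence, and maps directions to directions. If $\rho_D$ works for $D$, then $\rho_D\circ g^{-1}$ works for $gD$, and the compatibility relation is $g$-invariant. So $\mathcal D$ is a union of orbits, and by downward closure it is enough to produce weights for the maximal members of $\mathcal D$ up to symmetry, since a weight for $D$ also serves every subset of $D$. Lemma~\ref{lem:p16-directions} already shows that every $D$ with $|D|\le7$ is good, so only sets of size at least eight matter.

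The key combinatorial step is to enumerate $\mathcal D$ above size seven. I would run an orbit-by-orbit extension search. Start from the $3140$ orbit representatives of eight-element direction sets. For each, decide membership in $\mathcal D$ by a backtracking search that chooses one independent triple per direction (there are $5760/40=144$ triples per direction) subject to pairwise compatibility. This is a clique search in a compatibility graph and is small, since compatibility through a shared vector or an orthogonal incidence is restrictive. The three exceptional orbits should fail to lie in $\mathcal D$, or else need a separate weight. I expect them to fail, because each has two directions orthogonal to no other member, so the corresponding triples must pairwise share vectors with all others. For the surviving sets I would extend to nine, ten, and so on, one direction at a time, reducing by the similitude group at each level, until no member of $\mathcal D$ extends. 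For each maximal representative I would store a weight $\rho_D$ solving the linear system, namely one on the $720$ dependent triples and zero on the independent triples with direction in $D$, and check it by exact arithmetic.

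The main obstacle is the exceptional orbits at size eight and whatever maximal sets arise above it. A good weight must exist for every member of $\mathcal D$, yet Lemma~\ref{lem:p16-directions} allows three bad eight-sets. So completeness hinges on proving that those three orbits, and every larger set containing a bad subset, lie outside $\mathcal D$. I would establish this first, by an exhaustive compatibility search over the chosen triples for each of the three representatives. I would use their isolated directions to prune: for an isolated $h$, the triple of direction $h$ must share a vector with every other chosen triple. If some exceptional set turned out to be in $\mathcal D$, the fallback is to check directly whether it is good after all; the lemma only asserts the cover, not badness.
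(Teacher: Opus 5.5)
Your proposal is correct and is essentially the paper's argument: part (1) by direct substitution of the stored weight $R$, and part (2) by a symmetry-reduced, level-by-level enumeration of $\mathcal D$ that is complete by downward closure, with membership decided by backtracking over the $144$ triples per direction and stored weights checked in exact arithmetic. The differences are minor: the paper builds up from the empty set and certifies all $1241$ accepted representatives itself, rather than citing Lemma~\ref{lem:p16-directions} for $|D|\le7$ and certifying only maximal members; and your concern about the three exceptional eight-sets is settled by the computation itself, since every accepted set must receive a verified weight.
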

\begin{proof}[Finite verification and its completeness]
All computations are over $\mathbb F_3$. Enumerate the $80$ nonzero
vectors, their $2160$ nonorthogonal pairs, and their $6480$ balanced
triples directly. The supplied $2160$ entries of $R$ satisfy the $6480$
claimed equations by direct substitution.

For the second statement, use the group on the $40$ directions generated
by the transvections $x\mapsto x+\omega(x,v)v$, one for each direction
$v$. Closing these permutations under composition gives $25920$
distinct permutations. A direction set is represented by the least
integer bit mask among its images under this explicitly generated
group. Starting with the empty set, form the canonical images of all
one-direction extensions of every accepted representative from the
preceding level. Decide membership in $\mathcal D$ by exhaustive
backtracking: there are $144$ triples per direction; whenever a triple
is selected, intersect the remaining candidate lists with its
compatibility lists. No branch with a possible compatible selection
is removed. Each returned selection is also checked directly.

This enumeration is complete by induction. If a set belongs to
$\mathcal D$, each of its one-direction deletions belongs to
$\mathcal D$; applying a group element to a deletion and its missing
direction realizes its orbit among the candidates at the next level.
The resulting counts are
\[
\begin{array}{c|rrrrrrr}
 k&1&2&3&4&5&6&7\\\hline
 \text{candidates}&1&2&5&16&43&194&784\\
 \text{accepted}&1&2&5&16&43&175&391
\end{array}
\]
\[
\begin{array}{c|rrrrrrr}
 k&8&9&10&11&12&13&14\\\hline
 \text{candidates}&2951&5140&3115&827&93&4&1\\
 \text{accepted}&376&175&47&7&2&1&0.
\end{array}
\]
For each of the $1241$ nonempty accepted representatives, the supplied
$2160$ edge weights satisfy every prescribed triangle equation by
direct substitution. For any group image, transport the weights by the
inverse transformation. The empty set is covered by restricting the
requirements of a singleton certificate. As level $14$ is empty and
$\mathcal D$ is downward closed, there are no later levels.

The computation is implemented in \texttt{e2univ.cpp}. A separate Python
implementation \texttt{verify\_dd.py} enumerates the vectors, triples,
group, canonical representatives and compatibility searches afresh;
it reproduces every level of the table. Certificate verification also
checks the complete list of representative masks, the edge list and
all field entries, and directly evaluates every prescribed equation.
\end{proof}

\begin{theorem}\label{thm:central-free}
If $S$ has the no-full property, has length $2a+3$, and satisfies
$\bar v_i\ne0$ for every index $i$, then $\Theta(S)=0$.
\end{theorem}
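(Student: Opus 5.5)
We will show $\Theta(S)=0$ by the device of Lemma~\ref{lem:admissible}: produce edge weights $\rho$ on nonorthogonal index pairs with total weight one on every supported core, then multiply \eqref{eq:edge} by $\rho_e$ and sum over edges. Unsupported cores have $\mu_K=0$ and do not matter. Supported cores are exactly the cores of vector-zero triangle blocks, and by Lemma~\ref{lem:nonfull} their isolates are pairwise orthogonal. Let $D$ be the set of directions $\langle\sum_{i\in K}\bar v_i\rangle$ of the supported cores $K$ that are independent in $P$. A dependent supported core has projected sum zero and contributes no direction.

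The first step is to show $D\in\mathcal D$. For each $h\in D$, pick one supported core of direction $h$ and take its projection as the chosen triple. Because $\bar v_i\ne0$ for all $i$, Lemma~\ref{lem:cf-compatibility} applies to any two chosen cores with distinct directions. Either they share an index, so the projected triples share a vector, or $h\perp h'$ and a corner of one projects onto the direction of the other. This is exactly compatibility, so the chosen projected triples are pairwise compatible and $D\in\mathcal D$. We should check that projection does not identify corners: the corners of a balanced triple are pairwise nonorthogonal, hence distinct nonzero vectors, so each projected triple is again a balanced triple in $\mathcal T$.

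Next, define $\rho$ using Proposition~\ref{prop:cf-finite}. Set $w=R+\rho_D$ on $\mathcal E$ and put $\rho_{ij}=w(\{\bar v_i,\bar v_j\})$ for every nonorthogonal index pair. This is well defined, since nonorthogonal occurrences have nonorthogonal, hence distinct, projections. Central occurrences do not occur, and the radical $R$ of $\omega$ does not affect pairings.
\begin{itemize}
\item For a dependent supported core, $R$ gives $0$ and $\rho_D$ gives $1$, so the total is $1$.
\item For an independent supported core, its direction lies in $D$, so $R$ gives $1$ and $\rho_D$ gives $0$, so the total is again $1$.
\end{itemize}
Hence every supported core carries total weight one, and the summation argument of Lemma~\ref{lem:admissible} gives $\Theta(S)=\sum_e\rho_e\cdot 0=0$.

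The step I expect to need the most care is the first one, where the hypotheses of Lemma~\ref{lem:cf-compatibility} must be matched for cores coming from possibly different blocks. Each supported core has some vector-zero completion, and that completion must serve as the zero-sum block required by the lemma. We must also handle the possibility that two supported cores share a direction. This is harmless, since $\mathcal D$ requires only one chosen triple per direction, and $\rho_D$ is required to vanish on all independent triples with that direction, not only on the chosen one.
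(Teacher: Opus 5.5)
Your proposal is correct and follows the paper's proof essentially step for step. You put the direction set $D$ of supported independent cores into $\mathcal D$ via Lemma~\ref{lem:cf-compatibility}, using a vector-zero completion of each core as its zero-sum block, and then sum \eqref{eq:edge} against the pulled-back weights $w=R+\rho_D$ from Proposition~\ref{prop:cf-finite}. One imprecision is harmless: under the paper's definition of supported ($\mu_K\ne0$), "supported cores are exactly the cores of vector-zero triangle blocks" is only an inclusion, but your argument goes through unchanged under either reading.
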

\begin{proof}
Let $D$ be the set of directions of supported triples independent in
$P$. For each direction select one such triple. A supported triple is
contained in a zero-sum block, because its signed number of extensions
is nonzero. Lemma~\ref{lem:cf-compatibility} implies that the selected
projected triples are pairwise compatible. Thus $D\in\mathcal D$.

For any weights $w$ on projected edges, Proposition~\ref{prop:theta} gives
\[
 \sum_{T\ \mathrm{balanced}}\mu_T w(\bar T)
 =\sum_{e\ \mathrm{noncommuting}}w(\bar e)
        \sum_{T\supset e}\mu_T=0.
\]
Choose $w=R+\rho_D$ from Proposition~\ref{prop:cf-finite}.
For every dependent triple $w(\bar T)=1$; for every supported independent
triple, $w(\bar T)=1+0=1$. Unsupported triples have coefficient zero.
The displayed sum is consequently $\Theta(S)$, proving the theorem.
\end{proof}

\paragraph{Coordinates and reproducibility.}
The theorem is basis independent. The geometric certificate programs
use coordinates $(x_0,x_1,x_2,x_3)$ with
$\omega(x,y)=x_0y_2-x_2y_0+x_1y_3-x_3y_1$ and integer encoding
$x_0+3x_1+9x_2+27x_3$. To pass from the coordinate convention
$(u_0,u_1,u_2,u_3)$ with form
$u_0v_1-u_1v_0+u_2v_3-u_3v_2$, use
$(x_0,x_1,x_2,x_3)=(u_0,u_2,u_1,u_3)$.
Direction representatives are the smaller of the codes of $x$ and
$-x$, ordered increasingly; their positions are the bits of a mask.
The file \texttt{dd\_representatives.txt} lists the $1241$ nonempty
representatives. The file \texttt{dcompat.weights} records its edge
order in an initial \texttt{EDGES} line and subsequently one weight
vector for each representative. The file \texttt{R\_edges.txt} supplies
$R$ as triples consisting of two vector codes and a field entry.
No solver needs to be trusted to check these weights: each required
triangle equation is evaluated directly.

\section{Completion of the Davenport bounds}\label{sec:completion}
The two finite statements imply a uniform restriction, even outside
the range of the main theorem.
\begin{proposition}\label{prop:occupancy}
If a product-one-free sequence of length $2r+11$ over
$E_2\times C_3^r$ exists, its number $q$ of central terms and number $L$
of noncentral terms satisfy
\[
 1\le q\le 2r-6,\qquad L\ge17.
\]
\end{proposition}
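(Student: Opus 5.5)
The plan is to derive a contradiction $\Theta(S)=0$ against \eqref{eq:theta-one} in each excluded occupancy, using the two finite theorems already proved. Suppose a product-one-free sequence of length $2r+11$ over $G=E_2\times C_3^r$ exists, and write its terms as $(v_i,t_i)$ in the coordinates \eqref{eq:group}. With $a=r+4$ the length is $2r+11=2a+3$, so the vector list $S=(v_i)$ has exactly the critical length of Section~\ref{sec:framework}.

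First I would check that all the earlier hypotheses hold.
\begin{enumerate}
\item By the final assertion of Lemma~\ref{lem:nonfull}, every vector-zero subblock of $S$ is commuting or a balanced triangle with isolated vertices, so $S$ is No-full.
\item By Proposition~\ref{prop:theta}, $\Theta(S)=1$ and no occurrence has $v_i=0$.
\end{enumerate}
Hence every term is either central, meaning $\bar v_i=0$ but $v_i\in R\setminus\{0\}$, or noncentral, meaning $\bar v_i\ne0$. This gives the decomposition $q+L=2r+11$.

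\emph{Lower bound on $q$.} Suppose $q=0$. Then $\bar v_i\ne0$ for every index, and Theorem~\ref{thm:central-free} applies to $S$, giving $\Theta(S)=0$. This contradicts $\Theta(S)=1$, so $q\ge1$.

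\emph{Lower bound on $L$.} Let $U$ be the indexed list of the nonzero projections $\bar v_i$ of the noncentral occurrences; it has length $L$. Suppose $L\le16$. Then Theorem~\ref{thm:p16} supplies an edge potential for $U$. Lemma~\ref{lem:admissible} applies to the critical No-full list $S$ and again yields $\Theta(S)=0$, a contradiction. So $L\ge17$, and consequently $q=2r+11-L\le2r-6$.

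The main point needing care is the applicability of Lemma~\ref{lem:admissible} when central terms are present. Its proof places the noncentral isolates of a supported independent core on the line $\langle h\rangle$, while central isolates contribute nothing to the $P$-projection. I would reread that step to confirm two things: central terms are simply omitted from $U$, and the pair weights $\rho$ are pulled back from projected pairs to index pairs. Once that is confirmed, both bounds are formal consequences, and no further computation is needed.
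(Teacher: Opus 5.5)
Your proposal is correct and matches the paper's proof: $\Theta(S)=1$ comes from Proposition~\ref{prop:theta}, $L\ge17$ follows from Theorem~\ref{thm:p16} together with Lemma~\ref{lem:admissible}, $q\ge1$ follows from Theorem~\ref{thm:central-free}, and then $q=2r+11-L\le2r-6$. The point you flag needs no extra work, because Lemma~\ref{lem:admissible} is stated for exactly the projections of the noncentral occurrences, and its proof already uses that central isolates contribute nothing to the $P$-sum.
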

\begin{proof}
Proposition~\ref{prop:theta} gives $\Theta=1$. If $L\le16$, the
sixteen-term potential Theorem~\ref{thm:p16} and Lemma~\ref{lem:admissible} give
$\Theta=0$. Thus $L\ge17$. If $q=0$, Theorem~\ref{thm:central-free} gives
the same contradiction. Hence $q\ge1$, and
$q=2r+11-L\le2r-6$.
\end{proof}
\begin{proof}[Proof of Theorem~\ref{thm:main}]
For $r\le3$, Proposition~\ref{prop:occupancy} would give
$1\le q\le0$ for a product-one-free sequence of length $2r+11$.
Any longer product-one-free sequence would contain one of that length.
Consequently $\dd(E_2\times C_3^r)\le2r+10$.

For the reverse inequality, choose lifts $x_1,\ldots,x_4$ of a
symplectic basis of $E_2/E_2'$, a generator $z$ of $E_2'$, and
independent generators $c_1,\ldots,c_r$ of the direct central factors.
Take two copies of each of these $r+5$ elements.
In a product-one selection, projection to $G/G'$ forces the
multiplicity of every $x_i$ and $c_j$ to be zero modulo three.
Those multiplicities lie in $\{0,1,2\}$, so they are all zero.
The remaining multiplicity of $z$ must then also be zero. The
sequence is product-one-free and has length $2r+10$.
\end{proof}

\section{Reproducibility and scope}\label{sec:reproducibility}
\subsection{Exact finite certificates and reproducibility}
All arithmetic in the potential systems is performed in $\mathbb F_3$.
For a fixed indexed list, form the family of admissible balanced triangles
and attach one variable to every unordered index pair.  Each triangle
imposes the equation that the sum of its three edge variables is one.

\begin{lemma}[Private-edge reduction]
Suppose that an edge $e$ belongs to exactly one triangle $T$ in the
current equation system.  The system is consistent if and only if the
system obtained by deleting $T$ is consistent.  From any solution of the
smaller system one obtains a solution of the original system by setting
\[
  \rho_e=1-\sum_{f\subset T,\ f\ne e}\rho_f.
\]
\end{lemma}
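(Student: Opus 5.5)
The plan is to prove both directions of the equivalence directly, using only the fact that the variable $\rho_e$ appears in exactly one equation, namely the one for $T$. Write the current system as $\Sigma$, and let $\Sigma'$ be the system obtained by deleting the equation of $T$. I will treat $\Sigma'$ as a system in all the edge variables of $\Sigma$ except possibly $\rho_e$. Since $e$ lies in no other triangle, $\rho_e$ does not occur in $\Sigma'$.

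For the forward direction, any solution of $\Sigma$ restricts to a solution of $\Sigma'$. Every equation of $\Sigma'$ is an equation of $\Sigma$, and none of them involves $\rho_e$, so dropping that coordinate loses nothing.

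For the reverse direction, take a solution of $\Sigma'$. Define $\rho_e=1-\sum_{f\subset T,\,f\ne e}\rho_f$, where the two other edges $f$ of $T$ already have values assigned by the solution of $\Sigma'$. I will check two things.
\begin{enumerate}
\item The equation of $T$ holds, since $\rho_e+\sum_{f\ne e}\rho_f=1$ by construction.
\item Every other equation of $\Sigma$ holds unchanged, because none of them involves $\rho_e$.
\end{enumerate}
If some edge variables of $\Sigma$ appear neither in $\Sigma'$ nor as $e$, they can be set arbitrarily, say to zero. Such variables can only be edges of $T$ other than $e$ that are private to $T$ as well, and in the displayed formula they simply carry whatever value was chosen. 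Together these steps give consistency of $\Sigma$ together with the explicit lift.

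There is no real obstacle. The only point needing care is bookkeeping: the two other edges of $T$ must already carry values before $\rho_e$ is defined, which the convention above ensures. All arithmetic is in $\F$, so the subtraction is well defined. I will also note that iterating the lemma justifies the reduction-to-core procedure: reversing the deletions in the opposite order reconstructs full weights from core certificates, one private edge at a time. This is exactly the check described for the stored certificates.
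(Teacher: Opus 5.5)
Your proof is correct and matches the paper's argument: restriction gives the forward direction, and for the converse the variable $\rho_e$ occurs in no remaining equation, so the displayed assignment satisfies the equation of $T$ without disturbing any other. As in the paper, iterating this and undoing the deletions in reverse order reconstructs full weights from a core certificate.
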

\begin{proof}
The forward implication follows by restriction.  In the reverse
direction the variable $\rho_e$ occurs in no remaining equation, so the
displayed assignment changes none of those equations and satisfies the
deleted equation.  Successive deletions are reconstructed in reverse
order.
\end{proof}

The producer repeatedly removes a triangle having a private edge.
For every resulting nonempty labelled core it stores its complete list
of triangles, an explicit ternary edge-weight vector, and the first
indexed point list yielding that core.  Empty cores need no stored
weight vector.  The core cache uses an injective encoding: one bit for
each of the at most $\binom{16}{3}=560$ possible labelled triangles,
together with the number of core vertices in disjoint extra bits.
Thus equality of cache keys is equality of labelled systems, rather
than an unchecked graph-isomorphism assertion or a hash collision.

The independent Python checker enumerates admissible triangles anew,
checks every supplied core equation, identifies the core of each
stored witness list, reconstructs all original edge weights by the
lemma, and checks every original triangle equation.  It uses no linear
solver and imports no producer code.  A deliberately altered weight
is required to fail verification.  These checks verify all stored
weights.  Complete coverage of the searched configurations additionally
uses the enumeration algorithm and its mathematical completeness
argument; it is not inferred merely from the first witness of each
core.

\paragraph{Capacity-two stars.}
The two required sorted pool-size patterns are $122$ and $222$.
The new enumerator restricts each direction to at most two
indices, preserves the unique distinguished isolate, and contains no
appeal to any smaller potential theorem.  A separate Burnside
calculation, based on point cycles and remaining capacities on line
orbits, counts respectively $102{,}849{,}075$ and $2{,}548{,}176$
canonical templates.  Its generating factors for a line orbit of
length $\ell$ and remaining capacity $c\le2$ are
\[
 \sum_{j=0}^{c}(j+1)t^{\ell j}
 \quad\hbox{or}\quad
 \sum_{j=0}^{\lfloor c/2\rfloor}t^{2\ell j},
\]
according as the point action has two cycles of length $\ell$ or one
cycle of length $2\ell$.  Multiplication and extraction of the required
free-index degree give the fixed-point count; averaging over the pool
stabilizer gives the number of templates.  This integer Burnside
average is independent of the characteristic-three potential system.
Both complete producer runs finish with exit code zero.  The direct
checker verifies $8{,}980$ saved core certificates, $181{,}768$ core
equations and $265{,}122$ reconstructed triangle equations, with no
violations.  There are $8{,}509$ distinct labelled cores across the two
files.  The deliberately corrupted certificate is rejected.

\paragraph{Circuit search.}
The canonical starting lists comprise $81$ stars and $648$ nonzero
coefficient assignments.  The first exact search was completed in three
disjoint accepted parts: stars $0$--$10$, $11$--$32$, and $33$--$80$.
An interrupted additional branch in the first process is not used as a
completion certificate.  The checker verifies the exact starting-file
partition.  Across the three supplied core files it checks $21{,}541$
core certificates, $454{,}565$ core equations, and $660{,}527$ equations
on reconstructed indexed lists.  The second complete implementation
uses ordinary byte matrices and a different traversal order; its
recorded run completes all $648$ assignments in $46{,}308{,}926$ search
states with $57{,}188{,}554$ terminal systems.  Terminal counts from
different processes need not count globally distinct lists.

The terminal searches always enforce capacity two.  Consequently the
orientation-compressed length is exactly sixteen.  The supplied
producer header replaces the historical shortcut to a smaller
potential theorem by an explicit assertion of this fact; no smaller
potential theorem is required or called.  The independent byte-matrix
implementation has no such shortcut at all.

\paragraph{Reproduction.}
The ancillary README distinguishes the quick direct verification of
stored weights and starting-set coverage from complete search replay.
The quick Python checks require Python~3; only the projective-direction
orbit checker additionally requires NumPy.  Full replay uses a C++17
compiler supporting unsigned 128-bit integers for the first circuit
implementation, or the independent ordinary-matrix implementation.
All sources, inputs, stored weights, recorded outputs, and hashes needed
for the finite potential argument are included.

The restrictions in Proposition~\ref{prop:occupancy} do not exclude
all critical sequences when $r\ge4$. No value for those remaining
ranks is asserted here. In particular, a finite classification of
lists of length at most sixteen is not being extrapolated to
arbitrary length.

\subsection*{Use of computational and generative tools}
ChatGPT and Fable were used in developing proofs, checking algebraic
steps, writing and reviewing programs, and preparing this manuscript.
Separate implementations and checking programs were produced within
that AI-assisted workflow. They constitute distinct computational
checks, not independent human peer review. The mathematical claims
are the stated proofs and exact finite checks; unsuccessful searches
and informal model assessments are not evidence for them. The author
is responsible for the contents of the work.

\end{document}